\date{\today}
\newtheorem{theorem}{Theorem}[section]
\newtheorem{lemma}{Lemma}[section]
\newtheorem{claim}{Claim}[section]
\newtheorem{corollary}{Corollary}[section]
\theoremstyle{definition}
\theoremstyle{remark}
\newtheorem{remark}{Remark}[section]
\numberwithin{equation}{section}
\numberwithin{equation}{section}
\DeclareMathOperator{\R}{\mathbb{R}}
\DeclareMathOperator{\N}{\mathbb{N}}
\DeclareMathOperator{\one}{\mathbbm{1}} 
\newcommand{\De}{\mathrm d}
\newcommand{\E}[1]{\mathsf{E}\left[#1 \right]}
\newcommand{\var}[1]{\mathsf{Var}\left(#1 \right)}
\newcommand{\prob}[1]{\mathsf{P}\left(#1 \right)}
\newcommand{\abs}[1]{\left| #1 \right|} 
\newcommand{\norm}[1]{\left\lvert\! \left\lvert#1 \right\rvert\!\right\rvert}
\DeclareMathOperator{\e}{e}
\renewcommand{\O}[1]{\mathrm{O}\left(#1\right)} 
\renewcommand{\o}[1]{\mathrm{o}\left(#1\right)} 
\newcommand{\la}{\left<}
\newcommand{\ra}{\right>}
\DeclareMathOperator{\diam}{\mathrm{diam}}
\DeclareMathOperator{\dime}{\mathrm{dim}}
\newcommand{\f}{\frac}  
\newcommand{\eps}{\epsilon}
\newcommand{\blank}[1]{}
\renewcommand{\i}{{i\mkern1mu}}
\newcommand{\eq}[1]{\begin{equation#1}}
\newcommand{\eeq}[1]{\end{equation#1}}
\newcommand{\eqa}[1]{\begin{eqnarray#1}}
\newcommand{\eeqa}[1]{\end{eqnarray#1}}
\newcommand{\Ss}{\mathcal S(\mathbb R^d)}
\newcommand{\Sd}{\mathcal S^\prime(\mathbb R^d)}
\definecolor{magenta}{rgb}{1.0, 0.0, 0.56}
\begin{document}

\title[Thick points]{Thick points for Gaussian free fields with different cut-offs}
\author[A. Cipriani]{Alessandra Cipriani}
\address{Weierstra{\ss}-Institut, Mohrenstra{\ss}e 39, 10117, Berlin, Germany}
\email{Alessandra.Cipriani@wias-berlin.de}

\author[R. S. Hazra]{Rajat Subhra Hazra}\thanks{This project was completed during the stay of Rajat Subhra Hazra at the University of Zurich, and hence he acknowledges the Institute of Mathematics for its kind support.}
\address{Institut f\"ur Mathematik\\ Universit\"at Z\"urich\\ Winterthurerstrasse 190\\ 8057-Zurich, Switzerland}
\email{rajat.hazra@math.uzh.ch}

\keywords{Gaussian multiplicative chaos, cut-offs, Liouville quantum gravity, thick points, Hausdorff dimension}
\subjclass[2010]{Primary 60G60; Secondary 60G15}


\begin{abstract}
Massive and massless Gaussian free fields can be described as generalized Gaussian processes indexed by an appropriate space of functions. In this article we study various approaches to approximate these fields and look at the fractal properties of the thick points of their cut-offs. Under some sufficient conditions for a centered Gaussian process with logarithmic variance we study the set of thick points and derive their Hausdorff dimension. We prove that various cut-offs for Gaussian free fields satisfy these assumptions. We also give sufficient conditions for comparing thick points of different cut-offs.
\end{abstract}

\maketitle
\section{Introduction}
Let $D\subseteq \R^d$ with $d\ge 2$ (possibly $D=\R^d$). A generalized Gaussian field 
(GGF) $X$ is a collection of centered Gaussian random variables indexed by 
a certain class of functions $H$, that is, the field can be written as $\{(X,f): \, f\in H\}$. 
$H$ is in the present paper a Hilbert space of functions on $D$, and more specifically a Sobolev space. 
Notable examples of such GGFs are the massive and 
massless Gaussian free fields (GFF), for which correlations blow up logarithmically in the distance between two points. 
The study of GFFs has received considerable attention in the context of 
statistical mechanics and physics, 
as they can be seen as multidimensional generalizations of Brownian motion (see 
\citet{Sheff} for their construction and properties).

The two most important places (among many) where such fields have shown 
prominence is the construction of the Liouville Quantum Gravity measure 
(\citet{DS10}) and the theory of Gaussian multiplicative chaos 
(\citet{Kah85, Revisited}). In both these cases one constructs a random measure 
on $D$ given by
\begin{equation}\label{eq:lqg}
m_\gamma(\De x)= \exp\left(\gamma X(x)-\frac{\gamma^2}{2}\E{X(x)^2}\right)\De x,\quad \gamma \ge 0.
\end{equation}
For log-correlated models \eqref{eq:lqg} is known as Gaussian Multiplicative chaos (GMC) measure 
(after \citet{Kah85}). In particular, when $X$ is a planar massless or massive free field it is known 
as Liouville Quantum Gravity measure. 
Since $X$ is not defined pointwise, this measure is merely formal. To avoid this discrepancy, in \eqref{eq:lqg} $X$ is replaced by a 
space-time centered Gaussian process $\{X_\epsilon(x):\, x\in D, \,\epsilon>0\}$, which converges to $X$ in law, and for which \eqref{eq:lqg} makes sense. Note that $X_\eps$ retains log-correlations in the sense that $\var{X_\eps(x)}$ behaves like 
$\log(1/\eps)$ as $\eps$ goes to $0$, but $X_\eps$ is a ``proper'' Gaussian process in $x$.
This approach is used extensively in literature 
(see \citet{DRSV2} for example) and is connected to the seminal work 
of \citet{Kah85}. 
We describe more explicitly some of these approximations in Subsection~\ref{subsec:cut-description}.

It is natural to ask whether different approximations almost surely give the same GMC. The question 
was already studied in \citet{Kah85, RhoVarRev} where, 
for certain cut-offs, the equality in law was proved. In the case of planar 
(massless) GFF, it was shown by \citet{DS10} that measures arising from the circle 
average process and by the orthonormal basis expansion of $H^1_0(D)$ are almost surely the same. For some recent development in the area we refer to~\citet{Shamov}.

In this article we continue the study of almost-sure universality of 
cut-offs with respect to thick points (the term was used in \citet{HMP}, and also referred to 
as multi-fractal behavior in \citet{Kah85}). This is the set of points which encapsulates the extremal behavior of the field. For a cut-off $X_\epsilon(x)$ the thick points are defined as 
\begin{equation}\label{def:a-thick}
T(a)=\left\{x\in D\colon \lim_{\eps\to 0}\frac{X_\eps(x)}{\var{X_\eps(x)}}=a\right\},\quad a\geq 0.
\end{equation}
Their importance  comes from the fact that they have full mass for the 
Gaussian multiplicative chaos measure, and give also information on the 
behavior of the so-called Liouville Brownian motion for the Liouville quantum gravity measure (\citet{GarRV14}).  Under a H\"older-type condition we show in Theorem~\ref{thm:upper}  that the Hausdorff dimension $\dime_H$ of $T(a)$ has an upper bound of $d-{a^2}/{2}$ when $a< \sqrt{2d}$. 
The H\"older-type condition seems to 
be a minimal requirement as these fields are not smooth and hence exhibit a fractal behavior. 
 
The Hausdorff dimension has a lower bound of $d-{a^2}/{2}$ if $a<\sqrt{2d}$ as computed by \citet{Kah85}. Hence in Theorem~\ref{thm:lower} we recall briefly the main steps of his proof and give precise assumptions that complement our upper bound and wrap up the question of the Hausdorff dimension of thick points for GGF with log-correlations.

In view of the above results, one might ask whether 
there is a possibility of comparing the extremal behavior for different 
processes. We give a partial answer to this query 
by imposing a sufficient condition 
(see Theorem~\ref{thm:comparison}) on the 
difference of two cut-offs which yields almost the sure equality of thick points. 

The outline of the article is as follows. 
In Section~\ref{sec: construction} we review the definitions 
of massive and massless GFF and in Subsection~\ref{subsec:cut-description} some
cut-offs procedures. Then we state the main results with brief descriptions in Subsection~\ref{sec:main}. In Section~\ref{examples} we first show that the 
examples considered 
in Subsection~\ref{subsec:cut-description} satisfy the assumptions of these results. Finally, 
in Section~\ref{sec:proof} we provide the proofs of our results.

\section{Construction of  free fields and approximations}\label{sec: construction}
\subsection{Two examples of fields}
\subsubsection{Massive free fields on $\R^d$}
Let $\R^d, d\ge 2$. Let $\Ss$ be the Schwartz space consisting of smooth functions whose derivatives decay faster than any polynomial. Let $\Sd$ be the space of tempered distribution which are also the continuous linear functionals on $\Ss$. Also, $\Ss$ form a dense subset of $\Sd$ with respect to the weak*-topology. With $C_0^\infty(D)$ we denote the set of smooth and compactly supported functions on $D$. To avoid somehow lengthy notation, we set $L^2=L^2(\R^d,\,\mathrm d x)$ dropping the reference measure. For $\xi\in \R^d$, let $\la \xi \ra_m=\left(m^2+\|\xi\|^2\right)^{1/2} $ and we denote by $\la \xi\ra= \la \xi \ra_1$. We will denote in some instances the scalar product in a Hilbert space $H$ by angle brackets with a subindex as $\la\,\cdot,\,\cdot\ra_H$. For $s\in \R$ 
we denote the operator  $B_m^s \colon \Ss\mapsto \Ss$ defined by
\begin{equation}\label{eq:besseldef}
B_m^s\phi(x)= \int_{\R^d} e^{-i\la \xi,x\ra_{\R^d}}\la \xi\ra_m^s \widehat\phi(\xi) \De \xi. 
\end{equation}
This corresponds to the definition of the (formal) Bessel operator $B_m^s\phi:=
\left(m^2\mathbb I-\Delta\right)^{-s/2}\phi.$
Let us denote $G_d(x)= K_0(m\|x\|)$, where $K_0(\cdot)$ is the modified Bessel function; it is well known (see \citet{Stein}) that $\widehat G_d(\xi)= \la \xi \ra_m^{-d}$ and hence one can write
$$B_m^{-d} \phi(x)= \int_{\R^d} G_d(x-y) \phi(y) \De y.$$
We want to look at generalized massive free fields indexed by $ f\in \Ss$ such that $$\E{(X,f)(X,g)}=\la f, B^{-d}g \ra_{L^2}.$$ It can be shown that the  functional $$L(\phi)= \exp\left(-\frac12 \la \phi, B_m^{-d}\phi \ra_{L^2} \right),$$
is a positive definite, continuous functional and hence it induces a measure on $\Sd$ whose characteristic functional is given by $L(\phi)$. This gives a generalized Gaussian field $\{( X,\,\phi),\,  \phi\in \Ss\}$ whose covariance can be represented by
\begin{equation}\label{eq:cov}
\E{(X,\phi)(X,\psi)}= \int \widehat\phi(\xi) \overline{\widehat{\psi(\xi)}}\la\xi \ra_m^{-d} \De \xi
\end{equation}
(see \citet{Hi93}, \citet{Yag57}).
The tempered measure $\mu(\De x)= \la \xi \ra_m^{-d} \De \xi$ can be realized as the spectral measure of the covariance of this Gaussian process. We remark here that the Hilbert space associated to the GGF $X$ is in this case the fractional Sobolev space $H^{d/2}(\R^d)$, that we recall being defined by 
$$H^s(\R^d):=\{ \phi \in \Ss: B^s\phi \in L^2(\R^d)\},\quad s\in \R.$$
For details on the construction of such generalized Gaussian fields, we refer the interested readers to \citet{GVbook}. See also for a white noise representation for the massive free fields in Subsection~\ref{subsec:wncutoff}.

\subsubsection{Massless planar Gaussian Free Field} Let $C_0^\infty(D)$ be space of smooth functions vanishing outside $D$, a bounded domain of $\R^2$. Let $H^1_0(D)$ be the Hilbert space which is the closure of 
$C_0^\infty(D)$ under the norm
$$\|f\|_{H^1}^2 =\int_D \|\nabla f (x)\|^2 \De x.$$
The dual of $H_0^1(D)$ is given by $H^{-1}(D)$ equipped with the norm
$$\|f\|_{H^{-1}}=\sup_{g\in C_0^\infty(D), \|g\|_{H^1}\le 1} \la f, g\ra,$$
where $\la\cdot ,\,\cdot\ra$ denotes the duality pairing. Note that for $f, g\in C_0^\infty(D)$, we have by Green's identity that 
$\la f, g\ra_{H^1}=\la f, \Delta g\ra_{L^2}$ and it follows that $\la f, g\ra_{H^{-1}}= \la f, \Delta^{-1}g\ra_{L^2}$,
where for $g\in C_0^\infty(D)$ one denotes
$$\Delta^{-1}g(x)= \int_{D}G_D(x,y) g(y) \De y.$$
Here $G_D(x,y)$ is the Green's function for the Dirichlet problem on a planar
domain and it is well known that
\begin{equation}\label{eq:green1}
G_D(x,y)= 2\pi \int_0^\infty p_D(t,\,x,\,y)\De t.
\end{equation}
$p_D$ is the transition kernel of standard Brownian motion killed at exiting $D$.

A (massless) Gaussian free field can described as a centered Gaussian process indexed by $H^{-1}(D)$, that is, a collection
$\{(\Phi, f): \, \, f\in H^{-1}(D)\}$ such that $$\mathrm{Cov}\left( (\Phi, f)(\Phi,g)\right)= \la f, g\ra_{H^{-1}}.$$
If we restrict ourselves to $C_0^\infty(D)$ it can be shown that
$$\mathrm{Cov}\left((\Phi,f)(\Phi,g)\right)= \int_D\int_D f(x) g(y) G_D(x,y) \De x \De y.$$
If $D=[0,1]^2$ the Gaussian Free Field has a formal representation as 
\eq{}\label{eq:onb}\Phi= \sum_{j,\,k\in\N}X_{j,k}e_{j,k}\eeq{}
where
$e_{j,k}$ are eigenfunctions given explicitly by
$e_{j,k}(x,y)= 2 \left(\sin(\pi jx) \sin(\pi k y)\right)\left(j^2+k^2\right)^{-1/2},$
which also form an orthonormal basis of $H^1_0(D)$.  Thus $\Phi$ converges almost surely in $H^{-1}(D)$ as remarked above.
We refer the readers for a more detailed construction to \citet{Dubedat} and \citet{Sheff}.
\subsection{The construction of cut-offs}\label{subsec:cut-description}
There are several ways in which one can approach the question of approximating a field with infinite variance by cut-offs. We will list here only a few of those examples. 
\subsubsection{White-noise cut-offs for massive free fields}\label{subsec:wncutoff}
Let $W$ be a Gaussian complex white noise with control measure $\mu(\De \xi)=\la \xi \ra_m^{-d}\De \xi$.  Formally, the field $X$ is given by the characteristic function of the white noise. That is, if $\zeta(\lambda,\,\xi)= e^{-i\la \lambda, \xi\ra_{\R^d}}$, one can represent it as
$$X(\lambda)= \int_{\R^d} \zeta(\lambda,\,\xi) W(\De \xi),$$
which means that $(X,\phi)$ for $\phi\in \mathcal S(\R^d)$ has the stochastic integral representation
$$(X,\phi)= \int_{\R^d} \widehat{\phi }(\xi) W(\De \xi).$$
It is well-known (\citet[Chapter 1]{Lifshits}) that for any $f\in L^2_{\mathbb C}(\R^d)$ the integral above is well-posed.
Under the control measure $\mu$, the isometry property of the stochastic integral gives us the covariance of the field as~\eqref{eq:cov}. Note that since $W$ is a complex white noise with control measure $\mu$,  the field can also be represented by using a standard complex white noise $\widetilde W$ (with control measure $\De \xi$) in such a way that 
$$X(\lambda)= \int_{\R^d} \zeta(\lambda,\,\xi)\la \xi \ra_m^{-d} \widetilde W(\De \xi).$$
The above white noise representation helps to create the first example of white-noise cut-off. Pick now an arbitrary $\eps>0$. We denote the white noise cut-off as
\eq{}\label{eq:X_eps_I}
{X}_{{\epsilon}}{(x)}:=\frac1{\omega_d}\int_{B(0,\,1/\eps)} \zeta(x,\,\xi) W(\De\xi).\eeq{}
Here $\omega_d= 2\pi^{d/2}/\Gamma(d/2)$ is the volume of the $d$-dimensional unit ball $\mathbb S^d$. Such cut-offs are also known as ultra-violet (UV) cut-offs (see \citet{RhoVarRev}). 
We call this a cut-off for the field since if we denote by 
\eq{}
\label{eq:K_eps}
K_\eps(x,y)= \E{X_\eps(x)X_\eps(y)}=\int_{B(0,\,1/\eps)} \zeta(x-y,\,\xi) \la\xi\ra_m^{-d}\De\xi\eeq{} 
then for $f, g$ compactly supported smooth functions one has
$$\lim_{\eps\to 0}\int_{\R^d}\int_{\R^d} f(x)g(y) K_\eps(x,y)\De x \De y= \int \widehat f(\xi) \overline{\widehat{g(\xi)}}\la\xi \ra_m^{-d} \De \xi,$$
and the right hand is the same as~\eqref{eq:cov}.
One can also introduce many other cut-offs. One example is by taking a mollifier $\theta(\cdot)$ that satisfies 
\begin{enumerate}
 \item $\theta$ is positive definite and symmetric,
\item $\int_{\mathbb R^d}\theta(x)\De x=1$,
\item $|\theta(x)|\le \frac1{1+|x|^{d+\gamma}}$ for some $\gamma>0$,
\end{enumerate}
(an example is the Gaussian density). One can define $\theta_\eps(\cdot):=\eps^{-2}\theta\left(\eps^{-1}\left(\cdot\right)\right)$, so that the process $X_\eps(x):=\left(X,\,\theta_\eps\left(x-\cdot\right)\right)$ satisfies
\eq{}\label{eq:referee}
 {X_\eps(x)}=\int_{\R^d}\zeta(x-y,\,\xi)\widehat\theta(\eps \xi) W(\De \xi)
\eeq{}
where $W$ is again a complex white noise with control measure $\mu(\De \xi)=\la \xi \ra_m^{-d}\De \xi$. When $\theta$ is the normalised indicator function of the sphere $\mathbb S^2$ this cut-off is called {\em sphere average} (\citet{DS10, HMP})
\subsubsection{Integral cut-offs}
This cut-off has been extensively used by \citet{RhoVarRev} as it follows under the scope of the work of \citet{Kah85}. Consider the massive GFF on $\R^d$. For that one observes that $K_\eps(x,y) \to K_0(m\Vert x-y\Vert )$ as $\eps \to 0$ and $x\neq y$ (on the diagonal the modified Bessel function is infinite). For $x\neq y$ one may write
$$K_0(m\Vert x-y\Vert )= \int_1^{\infty} k_m(u\Vert x-y\Vert )\frac{\De u}{u}$$ where $$k_m(z)=\frac12\int_0^\infty e^{-\frac{m^2|z|^2}{2v}}e^{-v/2}\De v.$$
Now one denotes the {integral cut-off } of the covariance for $x,y\in \R^d$ as
\eq{}\label{eq:cov2}{H}_{{\epsilon}}{(x,y)}= \int_1^{1/\eps}k_m(u\Vert x-y\Vert )\frac{\De u}{u}\eeq{}
and associates to it a centered Gaussian process (we show in the appendix that $K_\eps$ gives rise to a positive definite functional). Note that even when $x=y$ this is well defined and it follows that $H_\eps(x,x)\sim -\log \eps$ as ${\eps\to 0}$. For the planar GFF one can define the integral cut-offs as follows. One considers for $\eps>0$
\begin{equation}\label{gff:icutoff}
 G_{\eps, D}(x,y)=2\pi \int_{\eps}^{+\infty} p_D(s,x,y) \De s.
\end{equation}
It is well-known that $G_{\eps, D}$ is a positive definite kernel (for a proof see \citet[Section 5.2]{RhoVarRev}) and hence one can consider a Gaussian process $X_\eps(x)$ such that
$\E{X_\eps(x)X_\eps(y)}= G_{\eps, D}(x,y)$. Note that, as before, it follows that for $f, g\in C_0^\infty(D)$ one has that
$$\lim_{\eps\to 0}\int_D\int_D f(x)g(y) G_{\eps, D}(x,y) \De x \De y =\la f, \Delta^{-1}g\ra_{L^2}.$$

We observe that $H_\eps(x,y)$ in \eqref{eq:cov2} and $K_\eps(x,y)$ in \eqref{eq:K_eps} are different pointwise and indeed it can be shown that there are $x,\, y\in \R^d$, such that $K_\eps(x,y)$ 
takes negative values whilst $H_\eps(x,y)$ is always positive. 
\subsubsection{Transition semigroup cut-offs}
These approximations rely on the particular transition semigroup of the massless Gaussian field, which is given in terms of the transition kernel of Brownian motion and follow somehow a mixed approach between the integral cut-off (compare for example \eqref{gff:icutoff} and \eqref{eq:cov2}) and the white-noise integration. Let us start with the planar case to illustrate the technique.
Let $W$ be a standard space-time Gaussian white noise on $D\times (0,\infty)$ with the Lebesgue measure as control measure; define the stochastic integral corresponding to the Gaussian free field as
$$X(x)= \sqrt{2\pi}\int_{D\times (0,\infty)} p_D(s/2, x,y ) W(\De y,\De s).$$
Now one represents the approximating field as
$$X_\eps(x):=\sqrt{2\pi}\int_{D\times (\eps,\infty)} p_D(s/2, x,y ) W(\De y,\De s).$$
It follows again that $\E{X_\eps(x) X_\eps(y)}= 2\pi\int_{\eps}^\infty p_D(s,x,y) \De s$ (see \citet{RhoVarRev}).

The very same decomposition works for the massive GFF too. Let $p(t,\,x,\,y)$ be the transition kernel for standard Brownian motion. Knowing that
$$
B_1^{-d}u(x)=\frac{\sqrt{2\pi}}{\Gamma(d/2)}\int_0^{+\infty} \int_{\R^d}\e^{t}t^{d/2-1}p(t,\,x,\,y)u(y)\De t\De y
$$
we can set
$$
{X}_{{\epsilon}}{(x)}:=\frac{\sqrt{2\pi}}{\sqrt{\Gamma(d/2)}}\int_{\R^d\times [\eps,\,+\infty)}\e^{t/2}t^{\frac12\left(d/2-1\right)}p(t/2,\,x,\,y)W(\De t,\,\De y).
$$
This decomposition extends in general to 
any operator whose action can be represented through the Brownian motion semigroup 
(as for example \citet{HuZae09}). Being very similar to integral cut-offs such as \eqref{gff:icutoff}, in the paper we do not treat these approximations as separate cases but refer to integral cut-offs for more general properties.

\subsection{Main results}\label{sec:main}
In this section we discuss the main results stated in the previous section. We give some general sufficient conditions under which the lower bound and upper bound on the fractal dimension can be proved. We also give sufficient conditions for comparing thick points of two different cut-offs. Later in the article we show that these sufficient conditions are satisfied by almost all of the the cut-offs described above. 

Notation: we write $a\wedge b:=\min\left\{a,\,b\right\}$ and $f(t)\sim_{t\to 0} g(t)$ means that $f(t)/g(t)\to 1$ as $t\to 0$.

\begin{theorem}\label{thm:upper}
 If $\left(X_\eps(x)\right)_{\eps\geq 0,\,x\in \R^d}$, $d\geq 2$, is a centered Gaussian process satisfying
\begin{enumerate}
\item[\label{(A)}{(A)}] for all $R>0$ and for all $x,y \in B(0,R)$ and $\eps, \eta\ge 0$ we have
$$\E{(X_\eps(x)-X_\eta(y))^2}\leq \frac{\Vert x-y\Vert +|\eta-\eps|}{\eta\wedge \eps},$$
\item[\label{(B)}{(B)}] the variance of the process satisfies
$$G(\eps):=\E{X_\eps(x)^2} \sim_{\eps\to 0} -\log \eps.$$
\end{enumerate}
Then letting $$T_\ge(a, R)=\left\{x\in B(0,R):\, \lim_{\eps\to 0}\frac{X_{\eps}(x)}{G(\eps)}\ge a\right\}$$ we have
 for $a\le \sqrt{2d}$ that $\dime_H( T_\ge(a, R)) \leq d-{a^2}/{2}$  almost surely, and for $a>\sqrt{2d}$ that $T_{\ge}(a, R)$ is empty almost surely.
\end{theorem}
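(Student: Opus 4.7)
The plan is to discretise in space and scale, bound the expected number of ``bad'' cubes at each dyadic scale using a Gaussian concentration estimate, and then conclude via Fatou's lemma applied to the Hausdorff measure.

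Fix $\delta\in(0,a)$ and set $\eps_n=2^{-n}$. Partition $B(0,R)$ into a grid of axis-aligned cubes $\{Q_{n,k}\}_{k}$ of side $\eps_n$, of which there are at most $C_R\eps_n^{-d}$. I call $Q_{n,k}$ \emph{bad} at scale $n$ if $\sup_{y\in Q_{n,k}}X_{\eps_n}(y)\ge (a-\delta)G(\eps_n)$, and denote by $N_n$ the number of bad cubes. For any $x\in T_\ge(a,R)$ one has $X_{\eps_n}(x)\ge (a-\delta/2)G(\eps_n)$ for every sufficiently large $n$, so the cube containing $x$ is bad at every such scale; equivalently, $T_\ge(a,R)$ is the countable increasing union over $N\in\N$ of the sets $T^{(N)}:=\{x\in B(0,R): x\text{ lies in a bad cube at every scale }n\ge N\}$, and each $T^{(N)}$ is covered by the bad cubes at any fixed scale $n\ge N$.

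Next I estimate $P(Q_{n,k}\text{ bad})$. By (A) with $\eta=\eps=\eps_n$, for $x,y\in Q_{n,k}$ the intrinsic pseudo-metric $d_n(x,y):=\sqrt{\E{(X_{\eps_n}(x)-X_{\eps_n}(y))^2}}$ satisfies $d_n(x,y)\le\sqrt{\|x-y\|/\eps_n}$. Since the Euclidean diameter of $Q_{n,k}$ is $\sqrt{d}\,\eps_n$, its $d_n$-diameter is bounded by $d^{1/4}$, and a routine Dudley entropy calculation (the $d_n$-covering number of $Q_{n,k}$ at scale $\rho$ is at most $\rho^{-2d}$) produces a constant $M$ independent of $n,k$ with $\E{\sup_{y\in Q_{n,k}}(X_{\eps_n}(y)-X_{\eps_n}(x_{n,k}))}\le M$. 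Combining this with the Borel--TIS inequality (and $\sup_x\var{X_{\eps_n}(x)}=G(\eps_n)$ from (B)) gives
\begin{equation*}
P(Q_{n,k}\text{ bad})\le \exp\!\left(-\frac{((a-\delta)G(\eps_n)-M)^2}{2G(\eps_n)}\right)\le \eps_n^{(a-2\delta)^2/2}
\end{equation*}
for all $n$ large enough, using $G(\eps_n)\sim-\log\eps_n$ to absorb the lower-order terms.

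Summing over $k$ gives $\E{N_n}\le C\eps_n^{-d+(a-2\delta)^2/2}$. Fix any $s>d-(a-2\delta)^2/2$ and any $N\in\N$: since $T^{(N)}$ is covered by the bad cubes at each scale $n\ge N$, Fatou's lemma yields
\begin{equation*}
\E{\mathcal H^s(T^{(N)})}\le \liminf_{n\to\infty}\E{N_n}\,(\sqrt{d}\,\eps_n)^s = 0,
\end{equation*}
so $\mathcal H^s(T^{(N)})=0$ a.s.\ and, by countable subadditivity, $\mathcal H^s(T_\ge(a,R))=0$ a.s. Letting $\delta\downarrow 0$ and taking a countable intersection in $s$ produces $\dime_H T_\ge(a,R)\le d-a^2/2$ a.s. For the emptiness claim when $a>\sqrt{2d}$, choose $\delta$ so small that $(a-2\delta)^2/2>d$; then $\sum_n\E{N_n}<\infty$, Borel--Cantelli forces $N_n=0$ eventually, and the covering shows $T_\ge(a,R)=\emptyset$ a.s. The main obstacle is uniformity of the Dudley/Borel--TIS bound across scales: assumption (A) is precisely calibrated so that $d_n$ has $O(1)$ diameter on $\eps_n$-cubes, which keeps both the entropy integral and the concentration variance bounded independently of $n$. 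Without this scaling, the correction $M$ would grow with $n$ and swamp the Gaussian tail factor $\eps_n^{(a-\delta)^2/2}$ that drives the whole argument.
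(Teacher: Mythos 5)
Your proof is correct, but it takes a genuinely different route from the paper's. The paper first upgrades (A) to an almost-sure modulus of continuity, jointly in $(x,\eps)$, via a quantitative Kolmogorov--Centsov lemma (Lemma~C.1 of Hu--Miller--Peres); it then works at scales $r_n=n^{-1/\chi}$ with spatial nets of the \emph{finer} mesh $r_n^{1+\chi}$, uses the modulus to transfer thickness at $x$ to the nearest net point up to an error $\delta(n)=C(\log n)^{\zeta-1}$, and finishes with a pointwise Gaussian tail bound and a first-moment computation of the $\alpha$-sum. You instead keep the cube side equal to the scale $\eps_n=2^{-n}$ and control the supremum over each cube directly: assumption (A) at equal scales makes the canonical metric on an $\eps_n$-cube have $O(1)$ diameter with scale-free covering numbers, so Dudley plus Borell--TIS yields a correction $M$ uniform in $n$ and the same tail exponent. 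This is cleaner and more self-contained --- you never use the $|\eta-\eps|$ part of (A), and you avoid the net/scale bookkeeping --- whereas the paper's explicit modulus of continuity is a reusable byproduct (the same device reappears in the proof of Theorem~\ref{thm:comparison}). One caveat worth recording: your reduction to the dyadic subsequence is legitimate only because $T_\ge(a,R)$ is defined through a genuine limit in $\eps$; for a $\limsup$ version of the set (which the paper's interpolation between $r_{n+1}$ and $r_n$ handles) you would need to enlarge the bad events to space--time boxes $Q_{n,k}\times[\eps_{n+1},\eps_n]$ and take the supremum there --- your entropy estimate still goes through on such boxes since $(\|x-y\|+|\eta-\eps|)/(\eta\wedge\eps)$ remains bounded on them.
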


Theorem~\ref{thm:upper} is stated for balls of radius $R$, but it can be used to derive the upper bound by first covering the space with a countable number of balls and then using the countable stability property of the Hausdorff dimension, which reads as
\eq{}\label{eq:stability}
\dim_H\left(\bigcup_{n\in \N}B_n\right)=\sup_{n\in \N}\dim_H\left( B_n\right)
\eeq{}
for an arbitrary collection of sets $(B_n)_{n\in\N}$.
In the following Corollary we treat as a separate case the Massless GFF, both for its importance and for the slightly different proof. 
\begin{corollary}\label{corol:GFF}
Let $D$ be a bounded, convex regular domain. For $\delta>0$, denote $D^{(\delta)}:=\{x\in D: \mathrm d(x, \partial D)>\delta)\}$. If $X_\eps(x)$ is a planar (massless) Gaussian free field integral cut-off, satisfying assumptions \hyperref[(A)]{(A)} and \hyperref[(B)]{(B)} on $D^{(\delta)}$ for any $\delta>0$. 
Then the conclusion of Theorem \ref{thm:upper} holds with $d=2$.
\end{corollary}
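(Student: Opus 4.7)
The plan is to transfer Theorem~\ref{thm:upper} from balls in $\R^2$ to the planar domain $D$ by exhausting $D$ with the interior sets $D^{(1/n)}$, $n\in\N$. Setting
$$T_\ge(a)=\left\{x\in D:\,\lim_{\eps\to 0}\frac{X_\eps(x)}{G(\eps)}\ge a\right\}$$
and observing that $\bigcup_{n\in\N}D^{(1/n)}=D$, the countable stability identity~\eqref{eq:stability} reduces the corollary to showing that for each fixed $\delta>0$,
$$\dime_H\left(T_\ge(a)\cap D^{(\delta)}\right)\le 2-\frac{a^2}{2}$$
almost surely when $a\le 2$, and that $T_\ge(a)\cap D^{(\delta)}$ is empty almost surely for $a>2$.

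Next, I would fix $\delta>0$ and cover the bounded set $D^{(\delta)}$ by finitely many open balls $B(x_1,R),\dots,B(x_N,R)$ of common radius $R<\delta/2$, so that each ball is contained in $D$. On every such ball the hypotheses~\hyperref[(A)]{(A)} and~\hyperref[(B)]{(B)} are available by assumption, so I would run the argument of Theorem~\ref{thm:upper} verbatim in dimension $d=2$ to obtain $\dime_H(T_\ge(a,R)\cap B(x_i,R))\le 2-a^2/2$ almost surely for $a\le 2$, and emptiness for $a>2$. A finite union over $i=1,\dots,N$ via~\eqref{eq:stability}, followed by the countable union over $n$ (taking $\delta=1/n$), would then yield the claim.

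The hard part here is essentially bookkeeping: Theorem~\ref{thm:upper} is stated on balls with assumptions~\hyperref[(A)]{(A)} and~\hyperref[(B)]{(B)} valid globally, whereas for the GFF these hypotheses are available only on the shrinking interior sets $D^{(\delta)}$, not uniformly up to $\partial D$. The covering argument above localises Theorem~\ref{thm:upper} and decouples the boundary loss from the fractal exponent, so that the critical value $2-a^2/2$ is unaffected by passing to subdomains. I would not expect any genuine obstacle, because the H\"older-type estimate in~\hyperref[(A)]{(A)} is precisely what drives the upper bound of Theorem~\ref{thm:upper}, and it is granted on every $D^{(\delta)}$ by hypothesis; convexity and regularity of $D$ are used only to ensure that one can cleanly place balls of radius comparable to $\delta$ inside $D$ when constructing the finite cover.
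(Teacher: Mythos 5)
Your proposal is correct and follows essentially the same route as the paper: exhaust $D$ by the interior sets $D^{(\delta_n)}$ with $\delta_n\downarrow 0$, run the argument of Theorem~\ref{thm:upper} on each piece (where (A) and (B) are available), and conclude via the countable stability property \eqref{eq:stability}. The extra finite cover of $D^{(\delta)}$ by balls of radius $R<\delta/2$ is only a bookkeeping device to match the literal ball-based statement of Theorem~\ref{thm:upper}; the paper simply reruns that proof directly on $D^{(\delta_n)}$.
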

In Section~\ref{examples} we will see that most of the cut-offs discussed in Subsection~\ref{subsec:cut-description} satisfy the assumptions of Theorem~\ref{thm:upper} and Corollary~\ref{corol:GFF}. A brief sketch of the proof is as follows. The condition \hyperref[(A)]{(A)} allows us to have a modification which has nice bounds on the spatial and time variable almost surely. We use a strong version of Kolmogorov-Centsov theorem from \citet{HMP} to derive this. Using these path properties it is possible to get an explicit cover of the space and also get good bounds for the diameters of the sets used to form the cover. The upper bound then follows easily from the definition of Hausdorff dimension.

Now we give some sufficient conditions on the cut-off for which we have a matching lower bound. We state the results for discrete time for ease of exposition, noting that it can be extended to continuous time if the processes have a continuous modification.

\begin{theorem}\label{thm:lower}
Let $\{X_n(x), x\in D, \, n\ge 1\}$ be a continuous centered Gaussian process with covariance kernel $q_n(x,y)$ which satisfies the following properties:
\begin{enumerate}

\item[\label{(C)}{(C)}] there exists a uniformly bounded function $H_U\colon D\times D\to \R$ such that for all $n\ge 1$ and $x\neq y$,
\begin{equation}\label{condC}
q_n(x,y) \le \log\frac1{\|x-y\|}+H_U(x,y),
\end{equation}
and there exist a constant $C'$ such that, for all $N\ge 1$, there exist $k_0\ge 1$ such that whenever $\|x-y\|\le \e^{-N}$, one has
\begin{equation}\label{eq:wn2}
q_k(x,\,y)-q_N(x,\,y) \le \log\frac1{\|x-y\|}-N +C'\quad \text{ for all $k\ge k_0$}.
\end{equation}
\item[\label{(D)}{(D)}] There exists a sequence of positive definite covariance kernels $\{p_k(x,\,y)\}_{k\ge1}$ such that $q_n(x,\,y)=\sum_{k=1}^n p_k(x,\,y)$ and $p_k(x,\,x)\le c_k$ for all $x,\,y\in D$, $n,\,k\ge 1$ and $\sum_{k\ge 1}c_k/k^2<+\infty$ and $q_n(x,\,x)\sim_{n\to+ \infty} n$.
\end{enumerate}
Consider the set of thick points 
$$T(a)=\left\{x\in D\colon \lim_{n\to+\infty} \frac{X_n(x)}{n}=a\right\}.$$
Then $\dime_H(T(a))\ge d-{a^2}/{2}$ almost surely.
\end{theorem}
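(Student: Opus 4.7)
When $a\ge\sqrt{2d}$ the bound $d-a^2/2\le 0$ is vacuous, so I restrict to $a<\sqrt{2d}$. Following Kahane, I will construct an associated Gaussian multiplicative chaos (GMC) measure $M_a$ on $D$, show that it is non-degenerate and almost surely carried by $T(a)$, and verify a local mass bound $M_a(B(x,r))\le r^{d-a^2/2-\eps}$ at $M_a$-a.e.\ $x$; the mass distribution principle then yields $\dime_H T(a)\ge d-a^2/2-\eps$ and, letting $\eps\downarrow 0$, the claim.

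\emph{Construction of $M_a$ and concentration on $T(a)$.} Using the increment decomposition from \hyperref[(D)]{(D)} we realize $X_n=\sum_{k=1}^n Y_k$ with $\{Y_k\}$ independent centered Gaussian fields of covariances $\{p_k\}$, and set
$$M_n^a(\De x):=\exp\bigl(aX_n(x)-\tfrac{a^2}{2}q_n(x,x)\bigr)\,\De x.$$
For each Borel $A\subseteq D$, $(M_n^a(A))_{n\ge 1}$ is a non-negative $\mathcal{F}_n:=\sigma(Y_1,\ldots,Y_n)$-martingale, hence converges a.s.\ to a limit $M_a(A)$, defining a random measure $M_a$ on $D$. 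Non-degeneracy of $M_a$ in the full subcritical range $a<\sqrt{2d}$ is Kahane's classical result, obtained via a $(1+\delta)$-moment bound on $M_n^a(D)$ with $\delta<2d/a^2-1$ that uses the covariance upper bound \eqref{condC} from \hyperref[(C)]{(C)}. To see that $M_a$ is carried by $T(a)$, pass to the Peyri\`ere rooted measure $Q(\De x,\De\omega):=\mathsf E[M_a(D)]^{-1}M_a(\De x)\,\mathsf P(\De\omega)$. Since the density $M_n^a(\De x)$ factorises over the independent layers $Y_k$, Girsanov implies that under $Q$, conditional on the root $x$, the fields $Y_k$ remain independent and $Y_k$ has its mean shifted by $ap_k(x,\cdot)$. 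Therefore, at the root,
$$X_n(x)=aq_n(x,x)+\widetilde X_n(x), \qquad \widetilde X_n(x):=\sum_{k=1}^n\bigl(Y_k(x)-ap_k(x,x)\bigr)$$
is a sum of independent centered Gaussians of variances $p_k(x,x)\le c_k$. Kolmogorov's strong law, applied through $\sum_k c_k/k^2<\infty$ from \hyperref[(D)]{(D)}, gives $\widetilde X_n(x)/n\to 0$ $Q$-a.s., and since $q_n(x,x)\sim n$ we deduce $X_n(x)/n\to a$ for $M_a$-a.e.\ $x$, i.e.\ $M_a$ is carried by $T(a)$.

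\emph{Dimension lower bound.} For any $\eps>0$ I show $M_a(B(x,r))\le r^{d-a^2/2-\eps}$ for $M_a$-a.e.\ $x$ and all small $r$. The same shift formula gives
$$\mathsf E_Q\bigl[M_a(B(x,r))\mid x\bigr]=\int_{B(x,r)}\lim_{n\to\infty}\e^{a^2q_n(x,y)}\,\De y\le Cr^{d-a^2}$$
in the $L^2$ regime $a^2<d$ by \eqref{condC}. Markov's inequality at threshold $t=r_n^{d-a^2/2}$ along $r_n=2^{-n}$ gives a tail at most $Cr_n^{a^2/2}$, which is summable in $n$; Borel-Cantelli then yields the pointwise bound. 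In the non-$L^2$ regime $a\in[\sqrt d,\sqrt{2d})$ one replaces $M_a$ by a Kahane truncated chaos (truncating $X_n(x)\le an+K$) whose second moment is finite; the refined scale-by-scale estimate \eqref{eq:wn2} in \hyperref[(C)]{(C)} is used to show this truncation still charges $T(a)$, and the $L^2$ calculation is then repeated. The mass distribution principle delivers $\dime_H T(a)\ge d-a^2/2-\eps$; letting $\eps\downarrow 0$ completes the proof. The main obstacle throughout is precisely this non-$L^2$ regime, where Kahane's truncation is needed to restore integrability and the fine comparison \eqref{eq:wn2} between consecutive cut-offs plays an essential role.
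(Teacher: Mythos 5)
Your overall architecture (Kahane's martingale chaos $M_a$, the Peyri\`ere rooted measure, the strong law via $\sum_k c_k/k^2<\infty$ to show $M_a$ is carried by $T(a)$, then a local regularity bound) matches the paper's, but the central dimension estimate contains a genuine error. Under the rooted measure the first moment is $\mathsf E_Q\left[M_a(B(x,r))\mid x\right]\le Cr^{d-a^2}$, and Chebyshev--Markov at threshold $t=r^{d-a^2/2}$ gives a tail bounded by $Cr^{d-a^2}/r^{d-a^2/2}=Cr^{-a^2/2}$, which \emph{diverges} as $r\to 0$; your claimed bound $Cr^{a^2/2}$ has the sign of the exponent reversed. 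A single first-moment Markov argument can only ever deliver $M_a(B(x,r))\lesssim r^{d-a^2-\eps}$, i.e.\ dimension at least $d-a^2$, which is strictly weaker than $d-a^2/2$ and becomes vacuous as $a^2\to d$. The reason is that the $Q$-mean of $M_a(B(x,r))$ is dominated by rare events and overshoots the typical value $r^{d-a^2/2}$ by a factor $r^{-a^2/2}$.

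The missing idea --- and what the paper actually does in its Step 2 --- is to factor the mass at scale $\e^{-N}$ as $Q^{(a)}\sigma(B_N(x))=Q_N^{(a)}(x)\,C_N(x)$ with $C_N(x)=R_N^{(a)}\sigma(B_N(x))$ the chaos built from the layers $k>N$ only. The macroscopic factor satisfies $\log Q_N^{(a)}(x)/N\to a^2/2$ $M^{(a)}$-a.s.\ by the strong law (it is \emph{not} controlled in expectation, where it would contribute $\e^{a^2N}$), while the first-moment/Borel--Cantelli argument is applied only to $C_N(x)$, using precisely \eqref{eq:wn2} to get $\E{R_{N,k}^{(a)}(x)R_{N,k}^{(a)}(y)}\le C\e^{-a^2N}\|x-y\|^{-a^2}$ and hence $\e^{\beta N}C_N(x)\to 0$ for $\beta<\alpha$. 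Multiplying the two factors recovers the exponent $\beta-a^2/2$. Relatedly, your treatment of the regime $d\le a^2<2d$ is only a sentence; the truncation route you gesture at (as in Hu--Miller--Peres/Berestycki) can be made to work, but the paper instead iterates Step 2 by writing $a^2=a_1^2+\cdots+a_n^2$ with each $a_k^2$ below the current regularity exponent, which is also what makes non-degeneracy hold in the full range $a<\sqrt{2d}$ under the stated hypotheses --- your appeal to a $(1+\delta)$-moment bound is not justified from \hyperref[(C)]{(C)} and \hyperref[(D)]{(D)} alone. As written, the proposal does not prove the claimed lower bound.
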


\begin{remark}\label{remark:improvement}
Although the conditions look technical it is easy to see that the above assumptions \hyperref[(C)]{(C)} and \hyperref[(D)]{(D)} get satisfied when the following two conditions are assumed.
\begin{enumerate}
\item[\label{(C1)}{(C1)}] There exist uniformly bounded functions $H_U$ and $H_L$ such that for $x\neq y$  \begin{equation}
\log\frac1{\|x-y\|}-H_L(x,y) \le q_n(x,y)\le \log\frac1{\|x-y\|}+ H_U(x,y)
\end{equation}
\item[\label{(D1)}{(D1)}]  There exists a sequence of positive definite covariance kernels $\{p_k(x,\,y)\}_{k\ge1}$ such that $q_n(x,\,y)=\sum_{k=1}^n p_k(x,\,y)$ and $p_k(x,\,x)=1$ for all $x,\,y\in D$, $n,\,k\ge 1$.
\end{enumerate}
\end{remark}


The proof is essentially given in \citet{Kah85}. The condition \hyperref[(D)]{(D)} allows one to construct a positive martingale using measures of the form \eqref{eq:lqg} which converge for every bounded set $A$. It is then standard to construct a limiting a measure out of it. We briefly sketch the idea of Kahane to show how one can easily adapt it to the above general conditions.  

Naturally one could ask oneself whether one can compare covariances of two cut-offs to deduce the behavior of thick points. Our next result is in that direction.

\begin{theorem}\label{thm:comparison}

Let $X_\eps(x)$ and $\widetilde X_\eps(x)$ be two cut-off families for the same field on $D$. Let $T(X,a)$ and $T(\widetilde{X},a)$ be the set of $a$-thick points for $X_\eps(x)$ and $\widetilde{X}_\eps(x)$ respectively. Call $Z_\eps(x):=X_\eps(x)- \widetilde X_\eps(x)$. Suppose $Z_\eps(x)$ satisfies the following assumption:
\begin{itemize}
 \item[\label{(E)}(E)] $Z_\eps(x)$ is symmetric in $x$ and there exists universal constants $C>0$, $C'>0$ independent of $\eps$ and $x$ such that
\eq{}\label{eq:ineq1}
\E{Z_\eps(x)^2}\le C \eeq{}
and
\eq{}\label{eq:ineq2}\E{(Z_\eps(x)-Z_\eps(y))^2}\leq C'\frac{\norm{x-y}}{\eps}.\eeq{}
\end{itemize}
Then for all $a>0$ we have $\mathrm{dim}_H(T(X,a))= \mathrm{dim}_H(T(\widetilde{X},\,a))$ almost surely.
\end{theorem}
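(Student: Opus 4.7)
The strategy is to show that, under assumption~\hyperref[(E)]{(E)}, the difference $Z_\eps$ is, uniformly in $x$ on compacta, of strictly smaller order than $\var{X_\eps(x)}\sim -\log\eps$. Once this is established a point is $a$-thick for $X_\eps$ if and only if it is $a$-thick for $\widetilde X_\eps$, and the equality of Hausdorff dimensions is immediate. Concretely, the plan is to prove
$$\sup_{x\in K}\frac{|Z_\eps(x)|}{-\log\eps}\xrightarrow[\eps\to 0]{}0\quad\text{almost surely}$$
for every compact $K\subseteq D$, and then to invoke this in the definition \eqref{def:a-thick} of thick points.

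\textbf{Step 1 (concentration at a single scale).} By \eqref{eq:ineq1} the centered Gaussian field $Z_\eps$ has variance at most $C$ on $K$, and by \eqref{eq:ineq2} its canonical pseudo-metric is controlled by $d_\eps(x,y)\le\sqrt{C'\|x-y\|/\eps}$. I would combine Borell--TIS with a chaining/Dudley estimate: discretise $K$ by a Euclidean $\eps^{1+\alpha}$-net $\Lambda_\eps$ (for a small fixed $\alpha>0$), whose cardinality is polynomial in $1/\eps$. A union bound on $\Lambda_\eps$ using the Gaussian tail $\mathbb P(|Z_\eps(x)|>t)\le 2\exp(-t^2/(2C))$, taking $t=\delta(-\log\eps)$, gives probability at most $\eps^{-Cd(1+\alpha)}\exp(-\delta^2(\log\eps)^2/(2C))$, which decays faster than any polynomial in $\eps$. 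For $x\in K$ one interpolates to the nearest lattice point $x_*\in\Lambda_\eps$ via \eqref{eq:ineq2}, whose canonical distance to $x$ is $\le\sqrt{C'\eps^\alpha}$; a standard Gaussian maximal inequality on a dyadic refinement of $\Lambda_\eps$ controls the interpolation error by $O(\eps^{\alpha/2}\sqrt{-\log\eps})$, which is negligible. The outcome is a Borell--TIS-type bound
$$\mathbb P\!\left(\sup_{x\in K}|Z_\eps(x)|>\delta(-\log\eps)\right)\le C_1\exp\!\bigl(-c_1\delta^2(\log\eps)^2\bigr).$$

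\textbf{Step 2 (Borel--Cantelli and interpolation in $\eps$).} Applied along the geometric sequence $\eps_n=\e^{-n}$ the bound of Step~1 is summable for every fixed $\delta>0$, so the first Borel--Cantelli lemma gives $\sup_{x\in K}|Z_{\eps_n}(x)|/n\to 0$ almost surely. To pass from the subsequence to arbitrary $\eps\to 0$, I use the Hölder-in-$(\eps,x)$ property~\hyperref[(A)]{(A)} that $X_\eps$ and $\widetilde X_\eps$ themselves possess as genuine cut-offs of the field, which transfers to a uniform modulus of continuity of $\eps\mapsto Z_\eps(x)$ (via a Kolmogorov--Centsov argument analogous to the one used in the proof of Theorem~\ref{thm:upper}); the oscillation of $Z_\eps$ over $[\eps_{n+1},\eps_n]$ is then $o(n)$ uniformly in $x\in K$ a.s., completing the claim.

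\textbf{Step 3 (conclusion).} The bound $\var{Z_\eps}\le C$ gives, via the triangle inequality for the $L^2$-norm, $|\sqrt{\var{X_\eps(x)}}-\sqrt{\var{\widetilde X_\eps(x)}}|\le\sqrt{C}$, so the two normalisations are asymptotically equivalent, both $\sim -\log\eps$. Writing
$$\frac{X_\eps(x)}{\var{X_\eps(x)}}-\frac{\widetilde X_\eps(x)}{\var{\widetilde X_\eps(x)}}=\frac{Z_\eps(x)}{\var{X_\eps(x)}}+\widetilde X_\eps(x)\left(\frac{1}{\var{X_\eps(x)}}-\frac{1}{\var{\widetilde X_\eps(x)}}\right),$$
Step~2 makes the first term vanish uniformly on $K$, and at any candidate thick point $\widetilde X_\eps(x)=O(-\log\eps)$ while the bracket is $O((\log\eps)^{-3/2})$, so the second term is $O((-\log\eps)^{-1/2})\to 0$ as well. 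Therefore $T(X,a)=T(\widetilde X,a)$ almost surely, hence $\dime_H(T(X,a))=\dime_H(T(\widetilde X,a))$.

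The main obstacle is Step~1: assumption~\hyperref[(E)]{(E)} only gives a Lipschitz-in-$L^2$ bound that \emph{degenerates} as $\eps\to 0$ (the constant is $C'/\eps$), so a naïve union bound would at best yield $\sup_x|Z_\eps|=O(\sqrt{-\log\eps})$, which is already enough, but obtaining the cleaner statement $o(-\log\eps)$ with an exponentially small probability requires balancing the discretisation scale $\eps^{1+\alpha}$ against the Gaussian tail so that the error from interpolating between grid points is of order $\eps^{\alpha/2}\sqrt{-\log\eps}\ll -\log\eps$; choosing $\alpha$ and verifying this balance cleanly is the most delicate piece of the argument.
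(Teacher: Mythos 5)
Your proposal is correct and its overall architecture coincides with the paper's: prove $\sup_{x\in K}|Z_\eps(x)|=\o{-\log\eps}$ almost surely via an expected-supremum bound of order $\sqrt{-\log\eps}$, Gaussian concentration, Borel--Cantelli along a discrete sequence of scales, and interpolation in $\eps$; then feed this into the definition of thick points. The one place where you take a genuinely different route is the supremum estimate itself. The paper (Claim~\ref{claim:maxima}) compares $Z_\eps$ via Sudakov--Fernique to an explicit stationary field with covariance $\frac{\sigma}{\rho}\e^{-\rho\|x-y\|}$ and then bounds the maximum of that field by a packing-number estimate, whereas you discretise on an $\eps^{1+\alpha}$-net, union-bound, and chain the interpolation error; your observation that the bounded variance \eqref{eq:ineq1} caps the canonical diameter at $O(1)$ while \eqref{eq:ineq2} controls small scales is exactly what makes the Dudley integral come out as $O(\sqrt{-\log\eps})$, so the two arguments buy the same thing, yours being self-contained and the paper's outsourcing the entropy computation to a comparison process. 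Your Step~3 is in fact slightly more careful than the paper's conclusion, since you explicitly reconcile the normalisation $\var{X_\eps(x)}$ from \eqref{def:a-thick} with $-\log\eps$ via the $L^2$ triangle inequality, while the paper works directly with $-\log\eps$. One shared caveat: assumption \hyperref[(E)]{(E)} only controls spatial increments of $Z_\eps$ at a fixed scale, so some regularity of $\eps\mapsto Z_\eps(x)$ must be imported to interpolate between the scales $\eps_n$; you do this explicitly by invoking \hyperref[(A)]{(A)} for the two cut-offs, and the paper does the same implicitly when it applies the Kolmogorov--Centsov modification \eqref{eq:modification} to $Z_\eps$, so this is not a gap relative to the paper's own argument.
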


Again we will give an example in Section~\ref{examples} where condition \hyperref[(E)]{(E)} is satisfied. 

To prove Theorem~\ref{thm:comparison} we show first that using Sudakov-Fernique's inequality one can compare the maxima of the Gaussian process $Z_\eps(x)$ with a multivariate version of the 
Ornstein-Uhlenbeck process for which the order of expected maxima can be easily derived. To pass to the almost sure version one uses bounded variances and Borell's inequality. This allows one to compare the set of thick points and derive the final result.

\section{Examples}\label{examples}
In this Section we explicitly show cut-offs that satisfy the assumptions of our 
theorems. We will concentrate on massive and massless GFFs but the results in general can be applied to centered Gaussian process with appropriate covariance structure too.   
\subsection{White noise cut-off for massive GFF \eqref{eq:X_eps_I}}
We will show \hyperref[(A)]{(A)}-\hyperref[(C)]{(C)}-\hyperref[(D)]{(D)} (\hyperref[(B)]{(B)} is a standard computation). The latter two conditions are outlined in the lecture notes by \citet{RVNotes} and hence we briefly sketch their proof.
\begin{enumerate} 
\item[{\hyperref[(A)]{(A)}}] Let us without loss of generality assume $\eps_1<\eps_2$ then it follows that
\begin{eqnarray*}
&&\E{(X_{\eps_1}(x)- X_{{\eps_2}}(y))^2}=\int_{\R^d}
 \frac{(\zeta(x,\xi)\one_{B(0,1/{\eps_1})}-\zeta(y,\xi)\one_{B(0,1/{\eps_2})})^2}{(\norm{\xi}^2+m^2)^{d/2}}\De\xi\\
&&=\int_{B(0,1/{\eps_2})}
 \left(\zeta(x,\xi)-\zeta(y,\xi)\right)^2\frac1{(\norm{\xi}^2+m^2)^{d/2}} \De\xi\\
&&+\int_{1/{\eps_2}< \|\xi\|\le
 1/{\eps_1}}
 \frac{\zeta(x,\xi)^2}{(\norm{\xi}^2+m^2)^{d/2}} \De\xi \leq C\frac{\norm{x-y}}{\eps_2}+\frac{|\eps_2-\eps_1|}{\eps_1}\leq C\frac{\norm{x-y}+|\eps_2-\eps_1|}{\eps_1\wedge \eps_2}
\end{eqnarray*}
where we have used the inequality $\left\lvert \log\left(\frac{x}{y}\right)\right\rvert\leq \lvert x-y\rvert/ {x\wedge y}$.
\item[{\hyperref[(C)]{(C)}}] The required Gaussian process in this case is taken to be $X_{\e^{-n}}(x)$, so that
$p_k(x,\,x)= \E{ X_{\e^{-k}}(x) X_{\e^{-k}}(y)}$. Recall that $G(\eps)= \omega_d^{-1}\int_0^{1/\eps} t^{d-1}\left(m^2+t^2\right)^{d/2}\De t$ and hence it follows that $q_n(x,\,x)=G(\e^{-n})\sim_{n\to +\infty} n$. Set $c_k:=\e^k$. Denoting by $\phi(u)=\left(1+u^d\right)\left(m^2+u^2\right)^{-d/2}$ one has
\eqa{*}
&&q_N(x,\,y)=\frac{1}{\omega_d}\int_{0}^{c_N}\left(\int_{\mathbb S^d}\e^{\i\la x-y,\,r\vec s\ra}\phi(r\vec s)\De\vec s\right)\frac{r^{d-1}}{1+r^d}\De r \\
&&=\frac{1}{\omega_d}\int_{0}^{c_N\wedge \|x-y\|^{-1}}\left(\int_{\mathbb S^d}\e^{\i\la x-y,\,r\vec s\ra}\phi(r\vec s)\De\vec s-1\right)\frac{r^{d-1}}{1+r^d}\De r +\int_{c_N\wedge \|x-y\|^{-1}}^{c_N}\frac{r^{d-1}}{1+r^d}\De r\\
&&+\frac{1}{\omega_d}\int_{c_N\wedge \|x-y\|^{-1}}^{c_N}\left(\int_{\mathbb S^d}\e^{\i\la x-y,\,r\vec s\ra}\phi(r\vec s)\De\vec s\right) \frac{r^{d-1}}{1+r^d}\De r 
\eeqa{*}
Call $H_r(x):=\frac{1}{\omega_d}\int_{\mathbb S^d}\e^{\i\la x,\,r\vec s\ra}\phi(r\vec s)\De\vec s.$ Now note that one break down $H_r(x)$ as
$$H_r(x)=\frac{1}{\omega_d}\left(\int_{\mathbb S^d}\e^{i\la x-y,\,r\vec s\ra}\left(\phi(r\vec s)-1\right)\De\vec s+\int_{\mathbb S^d}\cos(r\|x\|\vec s)\De\vec s\right)$$ 
and now using the following inequalities from~\citet{RVNotes} the condition follows 
\begin{equation}\label{eq:quadrupl}
\left| H_r(x)-1\right|\le \frac{C}{\left(1+r\right)^\alpha}+r|x|, \quad \left|\int_{\mathbb S^d}\cos\left(r\|x\|\vec s\right)\De\vec s\right|\le\frac{C}{\left(1+r\|x\|\right)^\eta},
\end{equation}
where $\alpha>0$ and $\eta \in \left(0,\,\frac12\right)$ and $C$ is some generic positive constant.
\item[{\hyperref[(D)]{(D)}}] We write as before
\eqa{*}
&&q_k(x,\,y)-q_N(x,\,y)=\frac{1}{\omega_d}\int_{c_N}^{c_k}\left(\int_{\mathbb S^d}\e^{i\la x-y,\,r\vec s\ra}\phi(r\vec s)\De\vec s\right)\frac{r^{d-1}}{1+r^d}\De r \\
&&=\frac{1}{\omega_d}\int_{c_N}^{\|x-y\|^{-1}}\left(\int_{\mathbb S^d}\e^{i\la x-y,\,r\vec s\ra}\phi(r\vec s)\De\vec s-1\right)\frac{r^{d-1}}{1+r^d}\De r +\int_{c_N}^{\|x-y\|^{-1}}\frac{r^{d-1}}{1+r^d}\De r\\
&&+\frac{1}{\omega_d}\int_{\|x-y\|^{-1}}^{c_k}\left(\int_{\mathbb S^d}\e^{i\la x-y,\,r\vec s\ra}\phi(r\vec s)\De\vec s\right) \frac{r^{d-1}}{1+r^d}\De r.
\eeqa{*}
Before proceeding note that the break-up is possible since $\|x-y\|^{-1}\ge \e^N=c_N$ and we take $k$ large enough so that $c_k \ge\|x-y\|^{-1} $ eventually. The first and third term are bounded uniformly in $x$ and $y$ by \eqref{eq:quadrupl}, whereas the main contribution comes from 
\eqa{*}
&&\int_{c_N}^{\|x-y\|^{-1}}\frac{r^{d-1}}{1+r^d}\De r\le \log\|x-y\|^{-1}-N+C(d).
\eeqa{*}
Note that $p_k(x,x)\le 1$ and $q_n(x,x)\sim n$, and this gives the desired result.
\end{enumerate}
\subsection{Integral cut-off for massive GFF \eqref{eq:cov2}}
We will briefly point out the computation for \hyperref[(A)]{(A)}. 
\begin{enumerate}
\item[{\hyperref[(A)]{(A)}}] We have
$$
\E{\left(X_\eps(x)-X_\eps(y)\right)^2} \leq  \vert\E{X_\eps(x)^2-X_\eps(x))X_\eps(y)} \rvert+\vert\E{X_\eps(y)^2-X_\eps(x)X_\eps(y)} \rvert. $$
We look at the first term and the other follows similarly. Using an appropriate change of variables we can write the first term as sum of two terms in the following way
\eqa{*}
&&\left|\E{X_\eps(x)^2-X_\eps(x)X_\eps(y)} \right|=\frac12\int_1^{1/\eps}\int_0^{\infty} \e^{-v/2}\left(1-\e^{-\frac{\Vert x-y\Vert ^2 m^2 u^2}{2v}}\right)\De v\frac{\De u}{u}\\
&&=\frac12 \int_0^{\frac{\Vert x-y\Vert  m}{\eps}} \left(\e^{-\frac{\Vert x-y\Vert ^2 m^2}{2s}}-\e^{-\frac{\Vert x-y\Vert ^2 m^2}{2\eps^2 s}}\right)\left(1-\e^{-s/2}\right)\frac{\De s}{s}\\
&&+\int_{\frac{\Vert x-y\Vert  m}{\eps}}^{\infty} \left(\e^{-\frac{\Vert x-y\Vert ^2 m^2}{2s}}-\e^{-\frac{\Vert x-y\Vert ^2 m^2}{2\eps^2 s}}\right)\left(1-\e^{-s/2}\right)\frac{\De s}{s}.
\eeqa{*}
Now in the first integral the integrand is bounded in absolute value by $C v/2$, hence the whole integral is smaller than $C{\Vert x-y\Vert  m}/{\eps}$. As for the second integral note that using the inequality $1-\e^{-x}\le x$ it follows that the integrand is bounded by $\|x-y\|^2m^2(\eps^{-2}-1) s^{-2}$ and hence again after integrating the integral cannot be larger than $C\|x-y\|m/\eps$.
Here for \hyperref[(C)]{(C)}-\hyperref[(D)]{(D)} one can take 
$$p_k(x,y)=\int_{\e^{(k-1)}}^{\e^{k}} k_m(u(x-y)) \frac{\De u}{u},\quad k\in \N$$ and it is straightforward to verify the conditions.
\end{enumerate}

\subsection{Planar GFF semigroup cut-off \eqref{gff:icutoff}}
The proof for the planar GFF is a bit more involved than for 
other cut-offs, and requires some preliminary lemmas and notations. We also would like to remind here that a proof tailored on the 2-d massless GFF for Theorem~\ref{thm:upper} is given in Corollary~\ref{corol:GFF}. We first show conditions \hyperref[(A)]{(A)} and \hyperref[(B)]{(B)} and for future references we put it as lemma.
\begin{lemma}\label{lem:condAB:gff}
Fix $\delta>0$, and for a set $D$ assume that $D^{(\delta)}$ is a convex bounded domain. There exists a constant $C=C (\delta)$ such that
$$\E{(X_\eps(x)-X_\eta(y))^2}\leq C\frac{\Vert x-y\Vert +|\eta-\eps|}{\eta\wedge \eps}$$
holds for all $x,y\in D^{(\delta)}$ and $G(\eps)= \var{X_\eps(x)}\sim_{\eps\to 0} -\log\eps$.
\end{lemma}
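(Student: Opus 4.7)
The plan is to reduce both conditions to standard Dirichlet heat-kernel estimates on the convex interior sub-domain $D^{(\delta)}$. Writing $p_D(s,x,y) = p(s,x,y) - r_D(s,x,y)$, where $p(s,x,y) = (2\pi s)^{-1}\e^{-\Vert x-y\Vert^2/(2s)}$ is the free planar kernel, the key classical input is the bound $0\le r_D(s,x,y)\le C_\delta s^{-1}\e^{-c_\delta\delta^2/s}$ uniformly for $x,y\in D^{(\delta)}$ and small $s$, combined with the spectral exponential decay $p_D(s,\cdot,\cdot)\lesssim \e^{-\lambda_1 s}$ as $s\to\infty$ on the bounded domain $D$.

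Condition \hyperref[(B)]{(B)} then follows directly: split $G(\eps)=2\pi\int_\eps^{s_0}p_D(s,x,x)\De s + 2\pi\int_{s_0}^\infty p_D(s,x,x)\De s$ at a fixed small $s_0=s_0(\delta,D)$. The tail is $O(1)$ by exponential decay, while on $[\eps,s_0]$ the identity $2\pi p_D(s,x,x) = s^{-1}+O(s^{-1}\e^{-c_\delta\delta^2/s})$ integrates to $-\log\eps + O(1)$, giving $G(\eps)\sim -\log\eps$ as $\eps\to 0$.

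For condition \hyperref[(A)]{(A)}, assume without loss of generality that $\eps\le\eta$. The coupled white-noise representation $X_\eps(w)=\sqrt{2\pi}\int_{D\times(\eps,\infty)} p_D(s/2,w,z)\,W(\De z,\De s)$ yields the orthogonal decomposition $X_\eps(x)-X_\eta(y)=Z_1+Z_2$, where $Z_1$ and $Z_2$ are independent Gaussians supported respectively on the disjoint time slabs $D\times(\eps,\eta)$ and $D\times(\eta,\infty)$. The time-gap part obeys
\[
\var{Z_1} = 2\pi\int_\eps^\eta p_D(s,x,x)\De s \le \int_\eps^\eta \frac{\De s}{s} = \log(\eta/\eps) \le \frac{\eta-\eps}{\eps},
\]
matching the $|\eta-\eps|/(\eta\wedge\eps)$ term of \hyperref[(A)]{(A)}. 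For the spatial part, Chapman--Kolmogorov together with symmetry of $p_D$ gives
\[
\var{Z_2} = 2\pi\int_\eta^\infty\int_D\bigl[p_D(s/2,x,z)-p_D(s/2,y,z)\bigr]^2\De z\,\De s = 2\pi\int_\eta^\infty\bigl[p_D(s,x,x)+p_D(s,y,y)-2p_D(s,x,y)\bigr]\De s.
\]
Using convexity of $D^{(\delta)}$ to keep the segment $[x,y]$ inside $D^{(\delta)}$, a line-integral (equivalently, second-order Taylor) argument combined with the free identity $p(s,x,x)+p(s,y,y)-2p(s,x,y) = (\pi s)^{-1}(1-\e^{-\Vert x-y\Vert^2/(2s)})\le \Vert x-y\Vert^2/(2\pi s^2)$ and the killing-correction bound above produces the uniform Hessian-type estimate $p_D(s,x,x)+p_D(s,y,y)-2p_D(s,x,y)\le C\Vert x-y\Vert^2/s^2$ for $s\le s_0$, while for $s\ge s_0$ the integrand is exponentially decaying. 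Splitting the $s$-integral at $s=\max(\eta,\Vert x-y\Vert^2)$ and using $\Vert x-y\Vert^2\le\diam(D)\Vert x-y\Vert$ in the bounded domain yields $\var{Z_2}\le C\Vert x-y\Vert/\eta$; combined with the bound on $\var{Z_1}$ this establishes \hyperref[(A)]{(A)}.

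The main technical hurdle is the uniform Hessian-type heat-kernel estimate on $D^{(\delta)}$. The leading free-kernel contribution is handled by the explicit formula, but one also needs to control the second difference $r_D(s,x,x)+r_D(s,y,y)-2r_D(s,x,y)$, not merely the size of $r_D$, with uniformity across the transition scale $s\asymp\delta^2$ at which the character of $r_D$ changes; it is exactly here that the distance $\delta$ to $\partial D$ and convexity of $D^{(\delta)}$ are used decisively.
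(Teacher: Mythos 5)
Your reduction to Dirichlet heat-kernel estimates is the right idea and runs parallel to the paper's proof: condition \hyperref[(B)]{(B)} is handled the same way (on-diagonal bound $p_D(s,x,x)\le(2\pi s)^{-1}$ plus a correction that is small when $\mathrm d(x,\partial D)\ge\delta$, plus an integrable tail), and your orthogonal white-noise decomposition $X_\eps(x)-X_\eta(y)=Z_1+Z_2$ is a clean way to isolate the time-gap term, whose bound $\var{Z_1}\le\log(\eta/\eps)\le|\eta-\eps|/(\eta\wedge\eps)$ is correct. The identity $\var{Z_2}=2\pi\int_\eta^\infty\bigl[p_D(s,x,x)+p_D(s,y,y)-2p_D(s,x,y)\bigr]\De s$ is also correct, and the free-kernel contribution is handled exactly as in the paper via $1-\e^{-u}\le u$.

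The gap is the ``uniform Hessian-type estimate'': you need to control the spatial differences of the killing correction $r_D(s,x,y)=\mathrm E_x\bigl[p(s-T_D,B_{T_D},y)\one_{\{T_D<s\}}\bigr]$ uniformly down to $s\to0$, you correctly identify this as the main technical hurdle, and then you assert it rather than prove it --- but this assertion is precisely the content of the lemma. Moreover, insisting on a genuine second-order (mixed Hessian) bound on $r_D$ is an unnecessary detour that makes the missing step harder than it needs to be: since $p_D(s,x,x)+p_D(s,y,y)-2p_D(s,x,y)\le|p_D(s,x,x)-p_D(s,x,y)|+|p_D(s,y,y)-p_D(s,x,y)|$, a first-order (gradient) estimate suffices. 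That is how the paper closes the argument: by the mean value theorem, $|r_D(s,x,x)-r_D(s,x,y)|\le\norm{x-y}\,\mathrm E_x\bigl[\sup_{\xi\in[x,y]}|\nabla_\xi p(s-T_D,B_{T_D},\xi)|\bigr]$, and the Gaussian gradient bound $|\nabla_\xi p(s,z,\xi)|\le C s^{-3/2}\exp\bigl(-\norm{z-\xi}^2/(4(1-\kappa)s)\bigr)$ together with $B_{T_D}\in\partial D$, convexity of $D^{(\delta)}$ (so $\xi\in D^{(\delta)}$ and $\norm{B_{T_D}-\xi}\ge\delta$) gives $\int_0^\infty s^{-3/2}\e^{-\delta^2/(4(1-\kappa)s)}\De s=C(\delta)<\infty$, hence a contribution $O(\norm{x-y})\le C\norm{x-y}/\eta$ with no delicate analysis at the transition scale $s\asymp\delta^2$. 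With that substitution your argument is complete; as written, the decisive estimate is missing.
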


\begin{proof}
We first begin by showing the second statement. Recall that
$$G(\eps)=2\pi\int_{\eps}^\infty p_D(t,x,x)\De t.$$
Also note that from \citet[Section 2.4]{Law05} we have the following upper and lower bounds on $p_D(t,x,x)$,
\begin{equation}\label{eq:Law_1}
\frac{1}{2\pi t}-\frac1{\pi \e
\left(\mathrm{d}(x,\,\partial D)\right)^2}\le p_D(t,x,x)\le \frac{1}{2\pi t}.
\end{equation}

Fix $t_0>1$, then we ignore the part from $(t_0,\infty)$ by using \citet[Lemma 2.28]{Law05}, since
$$\int_{t_0}^\infty p_D(t,x,x) \De t\le C(x,\delta)\int_{t_0}^\infty 
\frac1{t(\log t)^2}\De t < \infty.$$
Now using the fact that $x\in D^{(\delta)}$, it follows from~\eqref{eq:Law_1} that $G(\eps)\sim -\log \eps$ as $\eps\to 0$.
At this point we show the first bound. The case $x=y$ and $\eps<\eps'$ is easier using the fact that $p_D(t,x,y)\le p(t,x,y)$ and the properties of the heat kernel. We therefore concentrate on showing
%
Condition \hyperref[(A)]{(A)} for $x\neq y$, we use the representation from  \citet{MoerPer} 
$$p_D(t,x,y)=p(t,x,y)-\mathrm E_x\left[ p(t-T_D, B_{T_D},y)\one_{\left\{T_D<t\right\}}\right]$$
for $\mathrm E_x$ the law of a standard Brownian motion $B$ with $B_0=x$. Note that
\begin{align*}
\E{(X_\eps(x)- X_{\eps}(y))^2} &\le \left|\int_{\eps}^\infty \left(p_D(t,x,x)-p_D(t,x,y)\right)\De t\right| \\
&+\left|\int_\eps^\infty \left(p_D(t,y,y)-p_D(t,x,y)\right)\De t\right|.
\end{align*}
We shall show that
\eq{}\left |\int_\eps^\infty \left(p_D(t,x,x)-p_D(t,x,y)\right)\De t\right| \le C \frac{\Vert x-y\Vert }{\eps}.\label{eq:bound_prob}\eeq{}
The other part follows similarly. So now note that \eqref{eq:bound_prob} is satisfied if one replaces $p_D$ with $p$. We take $D$ to be a bounded domain, hence there exists $M>0$ such that $\Vert x-y\Vert\le M$ for all $x,\,y\in D^{(\delta)}$. So
\begin{align}
\int_{\eps}^\infty \left(p_D(t,x,x)-p_D(t,x,y)\right)\De t& =\int_\eps^\infty \left(1-\exp\left(-\frac{\Vert x-y\Vert }{2t}\right)\right )\frac{\De t}{t}\nonumber \\
&\le \frac{\Vert x-y\Vert ^2}{2}\int_\eps^\infty \frac{\De t}{t^2}\le\frac{M}{2} \frac{\Vert x-y\Vert }{\eps}.\label{eq: epsbound}
\end{align}
Now we need to show the term containing the expectation has a similar bound. First recall that using a multivariate version of the mean value theorem one has
$$\left|p(t, z, x)-p(t, z, y)\right|\le \left|\nabla p(t,z,(1-\lambda)x+\lambda y)\right|\norm{x-y}$$ with $\lambda\in [0,\,1]$. We use then the notation $\xi:=(1-\lambda)x+\lambda y$ to denote a point on the segment starting at $x$ and ending at $y$. Observe that $\xi\in D^{(\delta)}$.  From \citet{Sal-Cos} we have for any $\kappa\in (0,\,1)$
\eq{}\label{eq:S-C}
\Vert \nabla_{\xi} p(t,z, \xi)\Vert\leq \frac{C(\kappa)}{\sqrt t V(z,\,\sqrt t)}\exp\left(-\frac{\norm{z-\xi}^2}{4(1-\kappa)t}\right)
\eeq{}
and $V(x,\,r)$ is the volume of $B(x,\,r)$. Now using this inequality we have

\begin{align*}
&\int_\eps^\infty \mathrm E_x\left[ \left(p(t-T_D, B_{T_D}, x)-p(t-T_D, B_{T_D}, y)\right)\one_{\left\{ t>T_D\right\}}\right]\De t\\
&=\int_0^\infty \mathrm E_x\left[ \left(p(t-T_D, B_{T_D}, x)-p(t-T_D, B_{T_D}, y)\right)\one_{\left\{ t>T_D\vee \eps \right\}}\right]\\
&\stackrel{t-\tau_D=:s}{\leq} \mathrm E_x\left[\int_{0}^\infty  \left|p(s, B_0, x)- p(s, B_{T_D}, y)\right|\De s\right]\\
&\stackrel{\eqref{eq:S-C}}{\leq}\Vert x-y\Vert \mathrm E_x\left[\int_{0}^\infty  \left|p(s, B_0), x)- p(s, B_{T_D}, y)\right|\De s\right]\\
&\le \Vert x-y\Vert \mathrm E_x\left[\int_{0}^\infty  \frac{\exp\left(-\frac{-\Vert B_{T_D}-\xi \Vert^2}{4(1-\kappa)s}\right)}{s^{3/2}}\De s\right]\le \Vert x-y\Vert \mathrm E_x\left[\int_{0}^\infty  \frac{\exp\left(-\frac{-\delta^2}{4(1-\kappa)s}\right)}{s^{3/2}}\De s\right] \\
&=C(\delta,\,\kappa)\Vert x-y\Vert\le  C(\delta,\,\kappa)\frac{\Vert x-y\Vert}{\eps}.
\end{align*}
Here we have used the fact that $B_{T_D}\in \partial D$ and since $\xi\in D^{(\delta)}$ we have that $\Vert B_{T_D}-\xi \Vert \ge \delta$. So the above inequality combined with \eqref{eq: epsbound} shows~\eqref{eq:bound_prob} and hence completes the proof of the Lemma.
\end{proof}

\begin{enumerate}
\item[{\hyperref[(C)]{(C)}}] One has $$q_n(x,y)=2\pi\int_{e^{-n}}^\infty p_D(t,x,y)\De t, \qquad p_k(x,\,y)=2\pi\int_{\e^{-(k+1)}}^{\e^{-k}}p_D(t,x,y)\De t.$$ Using $p_D(t,x,y)\le p(t,x,y)$ the first bound in \hyperref[(C)]{(C)} easily follows. For~\eqref{eq:wn2} note that $$q_k(x,y)-q_N(x,y)=2\pi\int_{\e^{-k}}^{\e^{-N}}p_D(t,x,y)\De t$$ and since we consider $\|x-y\|\le e^{-N}$ one breaks this integral into two further integrals over $[\e^{-k}, \|x-y\|)$ and $[\|x-y\|, \e^{-N})$. By $p_D(t,x,y) \le \left(2\pi t\right)^{-1}$ one gets that the first integral is bounded as $x,\,y$ belong to a bounded domain, and the second by $-\log\|x-y\|-N$. Hence the bound follows.  

\item[(D)] Using $p_D(t,x,x)\le \left(2\pi t\right)^{-1}$ we obtain that $p_k(x,x)\le 1$. It follows from~\eqref{eq:Law_1} that $q_n(x,x)\sim_{n\to +\infty }n$. 

\end{enumerate}

\subsection{Example for comparison: cut-offs \eqref{eq:X_eps_I} and \eqref{eq:referee}}\label{subsec:example_comp}
This example illustrates the fact that the effect of the mollifier $\theta$ in \eqref{eq:referee} does not affect the structure of thick points, as one might rightly expect. Indeed let us show that \eqref{eq:ineq1} holds. 
Set $Z_\eps(x):=X_\eps(x)-\widetilde X_\eps(x)$ for $X_\eps(x)$ of \eqref{eq:X_eps_I} and $\widetilde X_\eps(x)$ of \eqref{eq:referee}. We have 
\begin{eqnarray}
&&\E{\left(X_\eps(x)-\widetilde X_\eps(x))\right)^2}\le 
\int_{B(0,\,1/\eps)}\frac{\left| 1-\widehat\theta(\eps\xi)\right|^2}{\left(\norm{\xi}^2+m^2\right)^{d/2} }\De \xi+\int_{B(0,\,1/\eps)^\mathrm c}\frac{\left| \widehat\theta(\eps\xi)\right|^2}{\left(\norm{\xi}^2+m^2\right)^{d/2} }\De \xi\nonumber\\
&&\leq \int_{B(0,\,1)}\frac{\left| 1-\widehat\theta(\xi)\right|^2}{\left(\norm{\xi}^2+\eps^2 m^2\right)^{d/2} }\De \xi+\int_{B(0,\,1)^\mathrm c}\frac{\left| \widehat\theta(\xi)\right|^2}{\left(\norm{\xi}^2+\eps^2 m^2\right)^{d/2} }\De \xi\nonumber\\
&&\le \int_{B(0,\,1)}\frac{\left| 1-\widehat\theta(\xi)\right|^2}{\norm{\xi}^d }\De \xi+\int_{B(0,\,1)^\mathrm c}\frac{\left| \widehat\theta(\xi)\right|^2}{\norm{\xi}^d }\De \xi\le C.\label{eq:X}
\end{eqnarray}
To obtain \eqref{eq:ineq2} observe that
\begin{eqnarray}
&&\mathbb E\left[\left(Z_\eps(x)-Z_\eps(y)\right)^2\right]\le 
\int_{B(0,\,1/\eps)}
\frac{(\zeta(x,\xi)-\zeta(y,\xi))^2\left(1-\hat\theta(\eps\xi)\right)^2}{(
\norm{\xi}^2+m^2)^{d/2} }\De \xi\nonumber
\end{eqnarray}
and from here one can proceed starting over again as in \eqref{eq:X} to conclude the proof of the condition. It is interesting to note when $\theta$ is the indicator of the sphere, one can derive the sphere average process from the white noise cut-off and visa-versa.

\section{Proof of the main results}\label{sec:proof}
Throughout this section $C$, $C'>0$ will represent generic constants which may differ from line to line.
\subsection{Proof of Theorems~\ref{thm:upper} and ~\ref{thm:lower}}
\begin{proof}[Proof of Theorem~\ref{thm:upper}]
First, we claim that by Assumption \hyperref[(A)]{(A)} of Theorem~\ref{thm:upper}, there exists a modification $\widetilde X_\eps(x)$ of
$X_\eps(x)$ such that for every $\gamma\in (0, 1/2)$ and $\chi, \zeta>0$ there exists $M>0$ such that
\begin{equation}\label{eq:KC}
\left|\widetilde X_{\eps_1}(x) - \widetilde X_{\eps_2}(y)\right| \le M \left(\log \frac1{\eps_2}\right)^\zeta\frac{\left(|(x,\eps_1)-(y,\eps_2)| \right)^\gamma}{\eps_2^{(1+\chi)\gamma}}
\end{equation}
for all $x, y\in B(0, R)$ and $\eps_1, \eps_2\in (0,1]$ and $\eps_2/\eps_1\in (1/2,2]$.
Indeed, by \hyperref[(A)]{(A)} we have that 

$$\E{\left(X_{\eps_1}(x)-X_{\epsilon_2}(y)\right)^\alpha}\le C \left(\frac{\|{x-y}\|+\abs{\epsilon_1-\epsilon_2}}{\epsilon_1\wedge \epsilon_2}\right)^{\alpha/2}.
$$
We can find $\alpha$ and $\beta$ large enough such that $\abs{\f{\beta}{\alpha}-\f{1}{2}}<\delta$, and consequently by \citet[Lemma C.1]{HMP} there exists a modification $\widetilde X_\eps(x)$
a.s.\ for which~\eqref{eq:KC} holds. Without loss of generality we now work with this modification and with a slight abuse of notation denote it by $X_\eps(x)$.
Now we choose some suitable parameters according to the regularity condition above. Let $\chi>0 $, $r\in\left(0,\,\frac12\right)$, $\zeta\in(0,\,1)$, $\widetilde r=(1+\chi)r$, $K=\chi^{-1}$, $r_n=n^{-K}$, and
$$U_R:=\left\{x\in B(0, \,R):\,\lim_{n\to+\infty}\frac{X_{r_n}(x)}{G(r_n)}\geq a\right\}.$$ 
Since for $t\in (r_{n+1},r_n)$ we have by~\eqref{eq:KC} and the fact that $G(r_n)=C\log n(1+\o{1})$,
$$
\left|\frac{X_t(x)-X_{r_n}(x)}{ G(r_n)}\right|=\O{\frac{(\log n)^\zeta}{G(r_n)}}=\o{1}.
$$
This shows that   $T_{\ge}(a, R)\subseteq U_R$. Let $(x_{nj})_{j=1}^{\bar k_n}$ be an $r_n^{1+\chi}$-net for points in $B(0,R)$.
Denote
$$
\mathcal A_n:=\left\{j:\,\frac{X_{r_n}(x_{nj})}{G(r_n)}\geq a -\delta(n)\right\}
$$
with $\delta(n)=C(\log n)^{\zeta-1}$ (the constant $C$ can be adjusted accordingly). Again using~\eqref{eq:KC} it follows that,
 for all $N\geq 1$,  $\bigcup_{n\geq N}\bigcup_{j\in \mathcal A_N}B\left(x_{nj},r_n^{1+\chi}\right)$ covers $U_R$ with sets having maximal diameter $2 r_n^{1+\chi}$.

We first note the estimate $\prob{j\in \mathcal A_n}$ using the following Gaussian tail bound as follows:
\begin{eqnarray*}
&&\prob{j\in \mathcal A_n}\le \prob{\frac{X_{r_n}(x_{nj})}{\sqrt{ G(r_n)}} \ge (a-\delta(n) )\sqrt{G(r_n)}}\leq C (\log n)^{-1/2}n^{ -\frac{a^2}{2\chi}(1+\o{1})}
\end{eqnarray*}
Furthermore
\begin{equation}\label{upper:expectation}
\E{|\mathcal A_n|}\leq C(\log n)^{-1/2} \overline k_n r_n^{-d(1+\chi)}n^{ -\frac{a^2}{2\chi}(1+\o{1})}\leq (\log n)^{-1/2} n^{-\frac{a^2}{2\chi}+d+\frac{d}{\chi}+\o{1}}.
\end{equation}

By denoting $$\alpha= d-\frac{a^2}{2}+\chi\frac{d+\frac{a^2}{2}}{1+\chi},$$
we can estimate the size of the balls in the cover as follows: 
\begin{align*}
&\E{\sum_{n\ge N}\sum_{j \in \mathcal A_n}\diam(B(x_{nj},r_n^{1+\chi}))^\alpha}\le \sum_{n\ge N}(\log n)^{-1/2} r_n^{\alpha(1+\chi)}n^{-\frac{a^2}{2\chi}+d+\frac{d}{\chi}+\o{1}}\\
&\le \sum_{n\ge N}(\log n)^{-1/2}n^{-\frac1{\chi}(1+\chi)\alpha-\frac{a^2}{2\chi}+d+\frac{d}{\chi}+\o{1}}\le C\sum_{n\ge N}(\log n)^{-1/2} n^{-d}<+\infty.
\end{align*}
Therefore $\sum_{n\ge N}\sum_{j \in \mathcal A_n}\diam(B(x_{nj},r_n^{1+\chi}))^\alpha<+\infty$ a.s.\ and this implies  $\dime_{H}(T_\ge(a,R))\le d-\frac{a^2}{2}$ a.s.\ by letting $\chi \downarrow 0$.

Now we show that for every $R>1$, $T_\ge (a,R)$  is empty for $a^2> 2d$ using the above estimates. Since $a^2>2d$ we have that $\frac{a^2}{2\chi}-d(1+\frac1{\chi})>1$ and hence,

\begin{equation*}
\sum_{n\ge 1}\prob{\abs{\mathcal A_n}>1}\leq \sum_{n\ge 1}\E{\abs{\mathcal A_n}}\le \sum_{n\ge 1} n^{-\left(\frac{a^2}{2\chi}-d\left(1+\frac1{\chi}\right)\right)}<\infty.
\end{equation*}
and hence by the Borel-Cantelli lemma we can conclude that, if $\chi$ becomes arbitrarily small, $\abs{\mathcal A_n}=0$ eventually and so $T_\ge(a,R)$ is empty for $a^2>2d$ with probability one.
\end{proof}

\begin{proof}[Proof of Corollary~\ref{corol:GFF}]
Recall that from Lemma~\ref{lem:condAB:gff} we have that Conditions \hyperref[(A)]{(A)} and \hyperref[(B)]{(B)} hold for the restricted domain $D^{(\delta)}$, for any $\delta>0$. Now, since $D$ is a regular domain we can write
$$
D=\bigcup_{n\in \N}D^{(\delta_n)}
$$
for a suitable sequence $\delta_n\downarrow 0$ and $\mathrm{d}(D^{(\delta_n)}, D^c)>0$. 
Now that by repeating the arguments in the proof of Theorem~\ref{thm:upper} and using Lemma~\ref{lem:condAB:gff}, we get that for all $n$, $\dim_H\left(T(a, D^{(\delta_n)})\right)\le 2-{a^2}/{2}$ with probability one. Hence by \eqref{eq:stability}
we obtain that $\dim_H (T(a,D))\le 2-{a^2}/{2}$ almost surely. 
\end{proof}
Now we provide a proof of the lower bound. 
\begin{proof}[Kahane's proof of lower bound]
The proof is essentially Kahane's proof of non-degeneracy, so we only give a brief outline of it. The broad idea is to construct a measure $\nu$ giving full mass to $T(a)$ which has finite $\alpha$-energy for $\alpha<d-{a^2}/{2}$.\footnote{The $\alpha$th-energy of a measure $\nu$ is defined by $I_\alpha(\nu):=\int_{D\times D}\frac1{\|x-y\|^\alpha}\nu(\De x)\nu(\De y).$} To show this one looks at the class of measures $R_\alpha$ which is described as follows:
$R_\alpha$ is the set of all Radon measures $\sigma$ on $D$ such that for all $\eps>0$ there exist $\delta>0$, $C>0$, and a compact set $K_\eps\subseteq D$ with $\sigma(D\setminus K_\eps)<\eps$ such that $\sigma_K(\De x):=\one_{K_\eps}(x) \sigma(\De x)$ satisfies
$$\sigma_K( D) \le \mathrm{diam}( U)^{\alpha+\delta}\text{ for every open set $U\subseteq D$}.$$
It is easy to show that if $\mu\in R_\alpha$ then $I_\gamma(\mu)<\infty$ for all $\gamma<\alpha+\delta$.\\
{\bf Step 1 (martingale measures).} Consider $\sigma \in R_\alpha$ and define a random measure $Q^{(a)}\sigma$ in the following way: let, with a slight abuse of notation, $Q_n^{(a)}\sigma(\De x):= Q_n^{(a)}(x) \sigma(\De x)$ where 
$$Q_n^{(a)}(x)=\exp\left( aX_n(x)-\frac{a^2}{2} \E{X_n(x)^2}\right).$$
Note that due to condition $\hyperref[(D)]{\hyperref[(D)]{(D)}}$, there exist independent random variables $\left(Y_k(x)\right)$ with covariance kernel given by $p_k(x,\,y)$ and $X_n(x)=\sum_{k=1}^n Y_k(x)$ in law. It is almost immediate that for any Borel set $A$, $Q_n^{(a)}\sigma(A)$ is a positive martingale and hence converges almost surely. Now one can show that there exists a probability measure (denoted by $Q^{(a)}\sigma$) on $D$ such that $Q_n^{(a)}\sigma$ converges weakly almost surely to $Q^{(a)}\sigma$. Note that when $a^2< \alpha$, by the upper bound in condition $\hyperref[(C)]{\hyperref[(C)]{(C)}}$ we have
$$\sup_{n\ge 1}\E{Q_n^{(a)}\sigma(A)^2}\le C I_{a^2}(\sigma) <+\infty.$$
Hence $Q_n^{(a)}\sigma(A)$ is an $L^2$ martingale and convergence is also in $L^2$ when $a^2<\alpha$. Also it follows that $\E{Q^{(a)}\sigma(D)}=\sigma(D)$. Hence $Q^{(a)}\sigma$ is a non-degenerate measure for $a^2<\alpha$.

{\bf Step 2.} This is the main technical step in which one shows that, for $a^2<\alpha$, $\sigma\in R_\alpha$ implies that $Q^{(a)}\sigma \in R_{\alpha-a^2/2}$. One uses the rooted measure for this scope. Let $M^{(a)}$ be the measure on $D\times \Omega$ defined by
$$M^{(a)}(\De x,\, \De\omega)= Q^{(a)}\sigma(\De x) \mathsf P(\De x)$$
where $\mathsf P$ is the probability on the space where $X$ lives.
We can choose $\sigma$ such that $\sigma(D)=1$ for simplicity and hence we have $M^{(a)}(D\times \Omega)=1$. Let us denote by
$$P_k^{(a)}(x)=\exp\left(aY_k(x)-\frac{a^2}{2}p_k(x,x)\right)$$
and note that $\E{P_k^{(a)}(x)}=1$ for all $k$. It can be shown that $Q_n^{(a)}=\prod_{k=1}^n P_k^{(a)}(x)$ in law.  Also one has the following properties of $Y_k(x)$ under $M^{(a)}$:
\begin{enumerate}
\item[i)] $Y_k(x)\sim\mathcal N(a\,p_k(x,\,x),p_k(x,\,x))$ and they are independent in $k$.
\item[ii)] $\int \log P_k^{(a)}(x)\De M^{(a)}= a^2/2$.
\item[iii)] $\int\left( \log P_k^{(a)}(x)\right)^2\De M^{(a)}= 3a^2/2$.
\end{enumerate}
To see ii) observe that
\begin{align*}
\int \log P_k^{(a)}(x)\De M^{(a)}=\lim_{n\to+\infty} \E{\int_D \log P_k^{(a)}(x) Q_n^{(a)}(x) \sigma(\De x)}=\E{\log P_k^{(a)}(x)P_k^{(a)}(x)}.
\end{align*}
Note that for $h>0$ we have $\E{P_k^{(a)}(x)^h}=\e^{\frac{a^2}{2}(h^2-h)}$. Now deriving with respect to $h$ at $1$ we have $\E{\log P_k^{(a)}(x)P_k^{(a)}(x)}=a^2/2$. iii) follows similarly. Using independence and ii)-iii) we can invoke the strong law of large numbers and say that $M^{(a)}$-almost surely
\begin{equation}
\frac{\log Q_N^{(a)}(x)}{N}=\frac{\sum_{k=1}^N \log P_k^{(a)}(x)}{N}\to \frac{a^2}{2}.
\end{equation}
Hence using Fubini and Egoroff's theorem we get $\mathsf P$-almost surely that for every $\eps>0$, there exist a compact set  $K_\eps^1\subseteq D$ such that $Q^{(a)}\sigma( D\setminus K_\eps^1)<\eps$ and $\log \left(Q_N^{(a)}(x)\right)/N\to a^2/2$  uniformly on $K_\eps^1$.

Let $N\ge 1$ be fixed. Let us define a measure $R_N^{(a)}\sigma(\De x)$ in the following way: let $R_{N,k}^{(a)}\sigma(\De x)=R_{N,k}^{(a)}(x)\sigma(\De x)$ where
$$R_{N,k}^{(a)}(x)=\prod_{i=N+1}^k P_i^{(a)}(x).$$
Then again using a martingale argument it follows that $R_{N,k}^{(a)}\sigma(\De x)$ converges weakly almost surely to a measure, call it $R_N^{(a)}\sigma(\De x)$. Note again that since $a^2<
\alpha$ by condition $\hyperref[(C)]{(C)}$ we obtain $L^2$ convergence as well. Let $B_N(x)$ be a ball of radius $\e^{-N}$ centered at $x$. Define $C_N(x)= R_N^{(a)}\sigma ( B_N(x)\cap D)$. Next we claim \begin{claim}\label{claim2}
For $a^2<\beta<\alpha$, one has $\e^{\beta N}C_N(x)\to 0$  as $N\to+ \infty$ $M^{(a)}$-almost surely.
\end{claim}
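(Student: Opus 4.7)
The plan is to estimate the first moment of $C_N(x)$ under the rooted measure $M^{(a)}$ and then apply Markov's inequality together with Borel--Cantelli along $N\in\N$. The key ingredients are the tilt implicit in $M^{(a)}$, the independence of the $Y_k$'s across $k$ granted by \hyperref[(D)]{(D)}, the near-diagonal covariance bound \eqref{eq:wn2} of \hyperref[(C)]{(C)}, and a Frostman-type estimate for $\sigma\in R_\alpha$ after restriction to a compact $K_\eps\subseteq D$ with $\sigma(D\setminus K_\eps)<\eps$ and $\sigma(B(x,r))\le Cr^{\alpha+\delta}$.

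Working with the finite truncation, Fubini gives
\begin{equation*}
\mathsf E\!\left[\int_D Q_n^{(a)}\sigma(\De x)\,R_{N,n}^{(a)}\sigma(B_N(x)\cap D)\right]=\int_D\!\int_{B_N(x)\cap D}\mathsf E\!\left[Q_n^{(a)}(x)R_{N,n}^{(a)}(y)\right]\sigma(\De y)\sigma(\De x).
\end{equation*}
Factoring $Q_n^{(a)}(x)=Q_N^{(a)}(x)R_{N,n}^{(a)}(x)$, using the independence of $Q_N^{(a)}(x)$ (a function of $Y_1,\ldots,Y_N$) from the $R_{N,n}^{(a)}$ factors (functions of $Y_{N+1},\ldots,Y_n$), and $\mathsf E[Q_N^{(a)}(x)]=1$, the inner expectation reduces to $\exp\!\bigl(a^2(q_n(x,y)-q_N(x,y))\bigr)$. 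Condition \eqref{eq:wn2} of \hyperref[(C)]{(C)} then bounds this by $C\e^{-a^2 N}\|x-y\|^{-a^2}$ for $\|x-y\|\le \e^{-N}$ and $n$ sufficiently large.

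A layer-cake integration using the Frostman bound $\sigma(B(x,r))\le Cr^{\alpha+\delta}$ for $x\in K_\eps$ yields $\int_{B_N(x)}\|x-y\|^{-a^2}\sigma(\De y)\le C\e^{-(\alpha+\delta-a^2)N}$. Combining this with the previous estimate and invoking Fatou's lemma together with the $L^2$ convergence of $Q_n^{(a)}\sigma$ and $R_{N,n}^{(a)}\sigma$ guaranteed by Step~1 (which legitimises passing from the finite-$n$ bound to the limit), one obtains
\begin{equation*}
\mathsf E_{M^{(a)}}\!\left[C_N(x)\one_{K_\eps}(x)\right]\le C\e^{-(\alpha+\delta)N}.
\end{equation*}
Markov's inequality then implies $M^{(a)}\!\left(\{(x,\omega):x\in K_\eps,\ \e^{\beta N}C_N(x)>\eta\}\right)\le C\eta^{-1}\e^{-(\alpha+\delta-\beta)N}$, which is summable in $N$ since $\beta<\alpha<\alpha+\delta$. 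Borel--Cantelli then gives $\e^{\beta N}C_N(x)\to 0$ $M^{(a)}$-a.s.\ on $\{x\in K_\eps\}$; taking a countable union over $\eps=1/m$ covers a set of full $\sigma$- (hence $Q^{(a)}\sigma$-) measure almost surely, completing the proof.

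The main obstacle is the careful manipulation of the rooted/Peyri\`ere measure: the factorisation of the joint expectation into a product and the exchange of the limit $n\to\infty$ with the integration both rely essentially on the $L^2$ control coming from $a^2<\alpha$. The remaining steps are standard Gaussian computations and a Frostman estimate exploiting the slack $\delta>0$ inherited from the definition of $R_\alpha$.
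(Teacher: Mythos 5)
Your proof is correct, and its core computation is the same as the paper's: you reduce the first moment of $C_N(x)$ under the rooted measure $M^{(a)}$ to the double integral of $\E{R_{N,k}^{(a)}(x)R_{N,k}^{(a)}(y)}=\e^{a^2(q_k(x,y)-q_N(x,y))}$ over the near-diagonal, and bound it via \eqref{eq:wn2} by $C\e^{-a^2N}\|x-y\|^{-a^2}$, exactly as in \eqref{eq:refer}--\eqref{eq:cov_R}. Where you diverge is the finishing move: you fix $N$, use the Frostman bound $\sigma_K(B(x,r))\le Cr^{\alpha+\delta}$ to get the per-$N$ estimate $\E{C_N\one_{K_\eps}}\le C\e^{-(\alpha+\delta)N}$, and conclude by Markov plus Borel--Cantelli; the paper instead bounds $\int\sum_{N\ge1}\e^{\beta N}C_N(x)\,\De M^{(a)}$ in one stroke by summing the geometric series in $N$ under the double integral and recognizing the result as $C(I_{a^2}(\sigma)+I_\beta(\sigma))<+\infty$, so that the a.s.\ finiteness of the sum forces the terms to vanish without any Borel--Cantelli step. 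The paper's route is slightly slicker and uses only finite energy (which is already needed in Step~1 for the $L^2$ martingale property), while yours gives an explicit exponential rate and exploits the slack $\delta$ in the definition of $R_\alpha$. One small imprecision to tidy up: the Frostman bound is a property of the restricted measure $\sigma_K$, not of $\sigma$ at points of $K_\eps$, so your layer-cake step really controls $R_N^{(a)}\sigma_K(B_N(x))$ rather than $C_N(x)=R_N^{(a)}\sigma(B_N(x)\cap D)$; since the claim is only used to show $Q^{(a)}\sigma\in R_{\alpha-a^2/2}$, which is itself a statement about restrictions to compacts of nearly full measure, this is harmless (and the paper is equally cavalier when it asserts $I_{a^2}(\sigma),I_\beta(\sigma)<\infty$ for $\sigma\in R_\alpha$), but it is worth stating explicitly which measure you are bounding.
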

By means of Claim~\ref{claim2} we complete the proof of Step 2. From Claim~\ref{claim2} and Egoroff's theorems it follows again that $\mathsf P$-almost surely, there exists a compact set $K_\eps^2\subseteq D$ such that $Q^{a}\sigma( D\setminus K_\eps^2)<\eps$ and $e^{\beta N}C_N(x)\to 0$ as $N\to+\infty$ uniformly on $K_\eps^2$. Let $K_\eps=K_\eps^1\cap K_\eps^2$ and $Q^{(a)}\sigma_K=\one_{K_\eps}(x) Q^{(a)}\sigma(\De x)$. 
$\mathsf P$-almost surely one has for every $N\ge1$
$$Q^{(a)} \sigma(\De x)= Q_N^{(a)}(x) R_N^{(a)}\sigma(\De x).$$
Hence it follows that
\begin{align*}
\limsup_{N\to+\infty}\frac{\log( Q^{(a)}\sigma_K(B_N(x))}{N}&\le \sup_{x\in K_\eps^1}\limsup_{N\to+\infty}\frac{\log Q_N^{(a)}(x)}{N}+\sup_{x\in K_\eps^2}\limsup_{N\to+\infty}\frac{\log C_N(x)}{N}=\frac{a^2}{2}-\beta,
\end{align*}
and the convergence is uniform for all $x\in K_\eps$. Now since $\beta<\alpha$ is arbitrary this entails that $Q^{(a)}\sigma\in R_{\alpha-a^2/2}$. Now the proof of step 2 will be complete if we show Claim~\ref{claim2}. Recall that $C_N(x)=\int_{D}\one_{\|x-y\|\le \e^{-N}}R_N^{(a)}\sigma(\De y)$. First note that using $L^2$ convergence we have
\begin{equation}
\int_{\Omega\times D}C_N^{(a)}(x)\De M^{(a)}=\lim_{k\to+\infty}\int_{D\times D}\one_{\|x-y\|\le \e^{-N}} \E{R_{N,k}^{(a)}(x)R_{N,k}^{(a)}(y)}\sigma(\De y) \sigma(\De x).\label{eq:refer}
\end{equation}
Note that
$$\E{R_{N,k}^{(a)}(x)R_{N,k}^{(a)}(y)}=\e^{a^2(q_k(x,y)-q_N(x,y))}.$$
Thus using \eqref{eq:wn2} there exists a constant $C'$ such that with $k\ge k_0$ we get
\begin{equation}\label{eq:cov_R}
\E{R_{N,k}^{(a)}(x) R_{N,k}^{(a)}(y)}\le C'\e^{-a^2N} \frac1{\|x-y\|^{a^2}} \quad \forall \,k\ge 1.
\end{equation}
So using the above bound one sees that
$$\int_{\Omega\times D}C_N^{(a)}(x)\De M^{(a)}\le C'\int_{D\times D}\e^{-a^2N}\frac1{\|x-y\|^{a^2}}\ \one_{\|x-y\|\le \e^{-N}}\sigma(\De x)\sigma(\De y).$$
Hence for $a^2<\beta<\alpha$ we get, for some constant $C$,
\begin{align*}
&\int\sum_{N=1}^{+\infty} \e^{\beta N} C_N(x) \De M^{(a)}
= C\sum_{N=1}^{+\infty} \e^{(\beta-a^2)N} \int_{D\times D}\frac1{\|x-y\|^{a^2}}\one_{\|x-y\|\le \e^{-N}}\sigma(\De x)\sigma(\De y)\\
&=C\int_{D\times D}\sum_{N=1}^{\lfloor -\log\|x-y\|\rfloor}\e^{(\beta-a^2) N}\frac1{\|x-y\|^{a^2}}\sigma(\De x)\sigma(\De y)\le C(I_{a^2}(\sigma)+ I_\beta(\sigma))<+\infty.
\end{align*}
Here in the last step we have used that there exists a constant $c>0$ such that 
$$\sum_{N=1}^{\lfloor -\log\|x-y\|\rfloor}\e^{(\beta-a^2) N}\le c\left(1- \frac1{\|x-y\|^{\beta-a^2}}\right).$$
Now this proves Claim~\ref{claim2} and hence Step 2.

{\bf Step 3: Iterative limit for $\alpha<a^2<2\alpha$ and final step.} Now we iterate Step 2 to show that if $\lambda$ is the normalized Lebesgue measure on $D$ then for $a^2<2d$, $Q^{(a)}\lambda\in R_{d-a^2/2}$. Assume $a^2>\alpha$. Choose $n$ real numbers $a_1,\,\cdots, \,a_n$ such that 
\eq{*}
\begin{array}{l}
a_1^2<d,\\
a_2^2<d-a_1^2/2,\\
 \cdots\\
 a_n^2<d-\sum_{j=1}^{n-1}a_j^2/2
\end{array}
\eeq{*} 
and $a^2=a_1^2+a_2^2+\cdots+ a_n^2$. Define recursively the following measures: let $S_{0}(\De x)=\lambda(\De x)$ and for $1\le k\le n$ $S_{k}(\De x)$ is the almost sure weak limit of the martingale measure $P_k^{(a)}(x) S_{k-1}(\De x)$. Since $a_k^2<d-\sum_{j=1}^{k-1}{a_j^2}/{2}$ and $S_0\in R_d$ we can apply Step 2 recursively to get that 
$S_k\in R_{d-\sum_{j=1}^{k-1}{a_j^2}/{2}}$ and $\E{S_k(D)}=\E{S_{k-1}(D)}=\lambda(D)$. Note that the later restriction shows that $S_k$ are non-degenerate measures. The finite energy condition now follows by observing that $S_n(\De x)= Q^{(a)}\lambda(\De x)$. Under the measure $M^{(a)}$ we have shown in Step 2 that $Y_k^{(a)}(x)$ are independent and distributed as $\mathcal N(ap_k(x,x),p_k(x,x))$. Since $p_k(x,x)\le c_k$ and $\sum_{k=1}^\infty c_k/k^2<\infty$ by Kolmogorov's strong law of large numbers one has that $M^{(a)}$-almost surely, $\sum_{k=1}^N {Y_k^{(a)}(x)}/{N}= {X_N(x)}/{N}\to a$. Hence $\mathsf P$-almost surely one has
$$\frac{X_N(x)}{N}\to a\quad \text{ $Q^{(a)}\lambda$- almost surely}.$$
This gives that $\mathsf P$-almost surely, $Q^{(a)}\lambda(T(a)^c)=0$. This completes the lower bound.
\end{proof}
\subsection{Proof of Theorem~\ref{thm:comparison}}
Before we start the proof of~Theorem~\ref{thm:comparison}, we state a useful claim which we implement in the proof. 
 \begin{claim}\label{claim:maxima}
Let $\{G_\eps(x), x\in B(0,R), \eps\in (0,1)\}$ be a centered Gaussian process, 
such that for some positive constant $C$
\begin{equation}
\E{(G_\eps(x)-G_\eps(y))^2}\leq C\frac{\norm{x-y}}{\eps}.
\end{equation} 
Then there exists constants $C_1$  (depending only on $C$, $d$ and $R$) such that
$$ \E{\sup_{x\in B(0,R)}G_\eps(x)} \le C_1 \sqrt{-\log \eps}.$$
\end{claim}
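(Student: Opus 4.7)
The natural approach is Dudley's entropy bound for Gaussian processes, applied to the intrinsic pseudo-metric
\[ d_\eps(x,y) := \sqrt{\E{(G_\eps(x)-G_\eps(y))^2}} \le \sqrt{C\|x-y\|/\eps}. \]
I would first reduce to increments: since $G_\eps$ is centered, $\E{\sup_x G_\eps(x)} = \E{\sup_x(G_\eps(x)-G_\eps(x_0))}$ for any fixed $x_0\in B(0,R)$. Next, since a $d_\eps$-ball of radius $u$ contains a Euclidean ball of radius $u^2\eps/C$, the covering numbers satisfy
\[ N(B(0,R), d_\eps, u) \le \left(\tfrac{3RC}{u^2\eps}\right)^d, \]
and Dudley's inequality then yields
\[ \E{\sup_{x\in B(0,R)} G_\eps(x)} \le K \int_0^{D_\eps} \sqrt{d \log(3RC/(u^2\eps))}\, \De u, \]
where $D_\eps$ denotes the $d_\eps$-diameter of $B(0,R)$.

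If $D_\eps$ is bounded by a constant uniform in $\eps$, the integral is easy to evaluate: writing the integrand as $\sqrt{d(-\log\eps + O(1) - 2\log u)}$ and using the elementary inequality $\sqrt{a+b}\le\sqrt{a}+\sqrt{b}$, the piece $\sqrt{d(-\log\eps)}$ integrated over a bounded interval gives $O(\sqrt{-\log\eps})$, while $\int_0^{D_\eps}\sqrt{-2d\log u}\,\De u$ converges to a finite constant. This produces the claimed bound $C_1\sqrt{-\log\eps}$.

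The main obstacle is exactly the uniform bound $D_\eps=O(1)$. The stated increment inequality alone only gives $d_\eps(x,y)\le\sqrt{2CR/\eps}$, hence $D_\eps=O(\eps^{-1/2})$, which would produce the much weaker bound $O(\sqrt{1/\eps})$. What one really needs is a uniform variance bound $\var{G_\eps(x)}=O(1)$, from which Cauchy--Schwarz gives $d_\eps(x,y)\le 2\sup_z\sqrt{\var{G_\eps(z)}}=O(1)$. This is precisely \eqref{eq:ineq1} in assumption \hyperref[(E)]{(E)} on the process $Z_\eps$ to which Claim~\ref{claim:maxima} is applied inside the proof of Theorem~\ref{thm:comparison}; with this additional input in hand the Dudley estimate above closes and delivers the claim.
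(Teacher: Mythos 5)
Your Dudley-entropy argument is correct, and it takes a genuinely different route from the paper. The paper proves Claim~\ref{claim:maxima} by Sudakov--Fernique comparison: it introduces a stationary field $T$ with covariance $\frac{\sigma}{\rho}\e^{-\rho\|x-y\|}$ ($\sigma=2C$, $\rho<\eps/2$), checks that the increments of $T(\cdot/\eps)$ dominate those of $G_\eps$, and then bounds $\E{\sup_{B(0,1/\eps)}T}$ by $C(d)\sqrt{\log N(B(0,1/\eps))}\le C(d)\sqrt{d\log(1/\eps)}$ via a packing-number lemma quoted from Chatterjee. Your chaining computation replaces both the auxiliary comparison process and the external lemma by the standard entropy integral; the covering estimate $N(B(0,R),d_\eps,u)\le (3RC/(u^2\eps))^d$ and the evaluation $\int_0^{O(1)}\sqrt{-\log\eps-2\log u+O(1)}\,\De u=O(\sqrt{-\log\eps})$ are both right. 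This is more self-contained, at the cost of needing the diameter control you discuss.

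Your remark about the intrinsic diameter is the substantive point, and you are right to insist on it: as literally stated, the claim fails without a uniform variance bound. For example, $G_\eps(x)=\xi\sqrt{C\|x\|/\eps}$ with a single standard Gaussian $\xi$ satisfies the increment hypothesis (since $(\sqrt a-\sqrt b)^2\le|a-b|$), yet $\E{\sup_{x\in B(0,R)}G_\eps(x)}=\sqrt{CR/(2\pi\eps)}$. The paper's argument does not really evade this issue either: its comparison field $T$ has variance $\sigma/\rho\ge 4C/\eps$, so a bound of the form $C(d)\sqrt{\log N}$ with $C(d)$ independent of $\eps$ cannot be extracted from a lemma that presupposes uniformly bounded variances. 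In the only place the claim is invoked --- the process $Z_\eps$ in the proof of Theorem~\ref{thm:comparison} --- assumption \hyperref[(E)]{(E)} provides exactly the bound \eqref{eq:ineq1} that you import, so your proof closes there (and indeed \eqref{eq:ineq1} is needed again immediately afterwards for Borell's inequality). The clean fix, which you have effectively made, is to add $\E{G_\eps(x)^2}\le C$ to the hypotheses of the claim.
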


\begin{proof}[Proof of Claim~\ref{claim:maxima}]
Without loss of generality let us take $R=1$, $D= B(0,1)$ and let $T(x)$ be a continuous, 
stationary, centered Gaussian process (indexed by $x\in D$) with 
$$\mathrm{Cov}(T(x), T(y))= \frac{\sigma}{\rho}\exp(-\rho\|x-y\|),$$
where $\sigma=2C$ and $\rho$ is some positive constant less than $\eps/2$. Such a Gaussian 
process exists, see for example \citet[Lemma 2.1]{OUSheet}. Using the fact that $1-e^{-x}\ge (x\wedge 1)/2$ we have that
\begin{align*}
&\E{(T(x/\eps)-T( y/\eps))^2}=\frac{\sigma}{\rho}-2\mathrm{Cov}(T(x/\eps),\,T(y/\eps))\\
&=\frac{\sigma}{\rho}\left(1-\exp\left(-\rho\norm{ x-y }\eps^{-1} \right)\right)\ge C\frac{\|x-y\|}{\eps}.
\end{align*}
This shows that $\E{(G_\eps(x)-G_\eps(y))^2}\le \E{(T(x/\eps)-T( y/\eps))^2}$. 
Hence by Sudakov-Fernique's inequality (\citet[Theorem 2.9]{Adl90}), we have that
\begin{equation}\label{eq:sudakov}
\E{\sup_{x\in D}G_\eps(x)}  \leq \E{\sup_{x\in D}T(x/\eps)}=\E{\sup_{x\in B(0,\eps^{-1})}T(x)}.
\end{equation}
Now we can can apply Lemma~11.2 of \citet{chatterjee2008chaos} to conclude that
\[
\E{\sup_{x\in B(0,\eps^{-1})}T(x)}\le C(d)\sqrt{\log N(B(0,1/\eps))},
\]
where, for $A\subseteq \R^d$, $N(A)$ denotes the $1$-packing number. Since it is bounded by the 
$1$-covering number of $B(0,1/\eps)$, it is easy to see that $N(B(0,1/\eps))$ is bounded from 
above by $\eps^{-d}$ and hence the claim now follows from~\eqref{eq:sudakov}.
\end{proof}

Now using Claim~\ref{claim:maxima} we derive a proof of Theorem~\ref{thm:comparison}.
\begin{proof}[Proof of Theorem~\ref{thm:comparison}]
First observe that Assumption~\hyperref[(E)]{(E)} implies we can apply the modified Kolmogorov-Centsov theorem as in Theorem~\ref{thm:upper}  and derive that,
for $x\in D= B(0,R)$ and $\eps\in (0,1]$, there exists a modification $\widetilde Z_\eps(x)$ of
$Z_\eps(x)$ such that for every $\gamma\in (0, 1/2)$ and $a, b>0$ there exists $M>0$ such that
\eq{}\label{eq:modification}\left|\widetilde Z_{\eps_1}(x) - \widetilde Z_{\eps_2}(y)\right| \le 
M \left(\log \frac1{\eps_2}\right)^b\frac{(\Vert x-y\Vert +|\eps_1-\eps_2|)^\gamma}{\eps_2^{(1+a)\gamma}}\eeq{}
for all $x, y\in B(0, R)$ and $\eps_1, \eps_2\in (0,1]$ and $\eps_2/\eps_1\in (1/2,2]$.

We work with a modification of the process and also use the same notation for the process and its modification. First we show that
\begin{equation}\label{eq: limsup}
\limsup_{\eps\to 0} \frac{Z_{\eps}(x)}{-\log \eps}=0.
\end{equation}
From Claim~\ref{claim:maxima} we have that $\E{\sup_{x\in D}Z_\eps(x)}\le C \sqrt{-\log \eps}$. By Borell's inequality (\citet[Chapter 5.1]{Adl90}), 
 \begin{equation}\label{eq:Borell}
\mathbb P\left(\left\lvert\sup_{x\in D}Z_\eps(x)-\E{\sup_{x\in D}Z_\eps(x)}\right\rvert\geq r\right)\leq C\mathrm{e}^{-cr^2/2},
\end{equation}
where $c= (\sup_{x\in D} \E{Z_\eps(x)^2})^{-1}$. Let $a>0$ and  if we choose $\eps_n:=n^{-1/a}$, $r_n=\sqrt{\log{\eps_n^{-3a/c}}}$ then it follows that 
$$\sum_{n=1}^\infty  P\left(\left\lvert\sup_{x\in D}Z_{\eps_n}(x)-\E{\sup_{x\in D}Z_{\eps_n}(x)}\right\rvert\geq r_n\right) \le C\sum_{n=1}^\infty \frac1{n^{3/2}}<+\infty.$$

Now by an easy application of Borel-Cantelli we have that 
$\sup_{x\in D}Z_{\eps_n}(x)= \mathrm{o}(-\log \eps_n)$ almost surely, 
since $\frac{r_n}{-\log \eps_n}\to 0$ as $n\to+\infty$.
Now we claim that due to continuity we can move from the discrete sequence to the continuous sequence. 
We plug in \eqref{eq:modification} the choice of $\eps_n=n^{-1/a}$ and let $\eps\in (\eps_{n+1},\eps_n)$ in order to have
\[
 \left|\sup_{x\in D} Z_{\eps}(x)- \sup_{x\in D} Z_{\eps_n}(x)\right|\le 
\left(\log \frac1{\eps_n}\right)^b\frac{|\eps-\eps_n|^\gamma}{\eps_n^{(1+a)\gamma}}
\le C (\log n)^b =\mathrm{o}(\log n).
\]
This implies that $ |\sup_{x\in D} Z_{\eps}(x)- \sup_{x\in D} Z_{\eps_n}(x)|/-\log \eps_n\to 0$ and hence,
using $Z_\eps(x)= Z_{\eps}(x)- Z_{\eps_n}(x)+ Z_{\eps_n}(x)$ we get that 
$\limsup_{\eps\to 0} {Z_{\eps}(x)}/{-\log \eps}=0$ almost surely.
What is left to show is the equality of the set of thick points, and we begin with the inclusion 
$T(X,a)\subseteq T(\tilde{X},a)$ almost surely. The other follows similarly. Let $x\in T(X,a)$, then 
as a consequence of~\eqref{eq: limsup} it holds that $\limsup_{\eps\to 0} 
\frac{\widetilde X_\eps(x)}{-\log \eps}\ge a$. Since $Z_\eps(x)$ is a symmetric process in $x$, 
we have 
$$\liminf_{\eps\to 0} \frac{\widetilde X_\eps(x)}{-\log \eps}\ge 
\liminf_{\eps\to 0}\left(\inf_{x\in D} \frac{-Z_\eps(x)}{-\log \eps}\right)+\liminf_{\eps\to 0} \frac{X_\eps(x)}{-\log \eps}.$$
Hence using $\liminf_{n}{x_n}=-\limsup_n (-x_n)$ we have 
$$
\liminf_{\eps\to 0} \frac{\widetilde X_\eps(x)}{-\log \eps}\ge\liminf_{\eps\to 0} \frac{X_\eps(x)}{-\log \eps}=a.
$$
This completes the proof of the fact that $T(X, a)\subseteq T(\tilde X, a)$; 
reversing the roles of $X_\eps(x)$ and $\tilde{X}_\eps(x)$ we get the other inclusion to complete the proof.
\end{proof}

\section{Acknowledgments} We are very grateful to two anonymous referees of previous 
versions of the article for suggesting us the question and some ideas on Theorem~\ref{thm:comparison}, and also the example in Subsec.~\ref{subsec:example_comp}. We also thank R\'emi Rhodes and Vincent Vargas for helpful discussions and for providing us with their lecture notes on Gaussian multiplicative chaos.

\bibliographystyle{plainnat}
\bibliography{literatur}

\end{document}